\documentclass{siamltex}
\usepackage[T1]{fontenc}
\usepackage[latin9]{inputenc}
\usepackage{amssymb}
\usepackage{sw20siam}
\usepackage{mathrsfs}
\usepackage{graphicx}
\usepackage{mathtools}
\usepackage{multirow}
\usepackage{cite}

\everymath{\displaystyle}
\raggedbottom
\linespread{1.05}

\makeatletter
\numberwithin{equation}{section}
\numberwithin{figure}{section}


\newcommand*\md{\mathop{}\!\mathrm{d}}


\usepackage{color}
\usepackage[colorlinks]{hyperref}

\newtheorem{thm}[theorem]{Theorem}
\newtheorem{lem}[theorem]{Lemma}
\newtheorem{prop}[theorem]{Proposition}
\newtheorem{cor}[theorem]{Corollary}
\newtheorem{rem}[theorem]{Remark}

\newtheorem{condition}[theorem]{Assumption}
\newtheorem{example}[theorem]{Example}
\makeatletter
\def\@thm#1#2{%
  \stepcounter{#1}%
  \protected@edef\@currentlabel{\csname p@#1\endcsname\csname the#1\endcsname}%
  \@ifnextchar[{\@ythm{#1}{#2}}{\@xthm{#1}{#2}}}
\def\@begintheorem#1#2{\par\phantomsection\bgroup{\scshape #1\ #2. }\it\ignorespaces}
\def\@opargbegintheorem#1#2#3{\par\phantomsection\bgroup%
   {\scshape #1\ #2\ ({\upshape #3}). }\it\ignorespaces}
\makeatother

\newcommand{\R}{\mathbb{R}}

\newcommand{\E}{\mathbb{E}}
\newcommand{\eps}{\varepsilon}

\makeatother

\title{Optimal Control of SDEs with Merely Measurable Drift: An HJB Approach\thanks{{\bf Funding:} {This work was financially supported by the National Key R\&D Program of China  (No. 2023YFA1009002).
The first author was supported by the National Natural Science Foundation of China (12222103, 42450269), and LMNS at Fudan University. 
The second author was supported by the NSF of P. R. China (No. 12371443), and the Changbai Talent Program
of Jilin Province.}}}

\author{
  Kai Du%
  \thanks{Shanghai Center of Mathematical Sciences, Fudan University, Shanghai 200433, China
          (Email: {\tt kdu@fudan.edu.cn}).}%
  \and
  Qingmeng Wei%
  \thanks{Corresponding author. School of Mathematics and Statistics, Northeast Normal University, Changchun 130024, China
          (Email: {\tt weiqm100@nenu.edu.cn}).}%
}

\begin{document}
\maketitle

\begin{abstract}
We investigate a finite-horizon optimal control problem for a diffusion with additive noise, where the control-dependent drift and running cost are merely measurable in the state variable and satisfy a global $L^p$-integrability condition.  
Such low regularity rules out the direct use of Pontryagin's maximum principle and also invalidates the standard proof of the Bellman principle of optimality.  
We address these difficulties by analyzing the associated Hamilton--Jacobi--Bellman (HJB) equation.  
Working in a weak formulation of admissible controls, we first establish the state-equation solvability and Krylov estimates needed to make the control problem well defined.  
Using PDE techniques together with a policy iteration scheme, we prove that the HJB equation admits a unique Sobolev solution in $W^{1,2}_p$, and this solution coincides with the value function of the control problem.  
Based on this identification, we establish a verification theorem and recover the Bellman optimality principle without imposing any additional smoothness assumptions.  

We further investigate a mollification scheme depending on a parameter $\varepsilon > 0$.  
It turns out that the smoothed value functions $V_{\varepsilon}$ may fail to converge to the original value function $V$ as $\varepsilon \to 0$, and we provide an explicit counterexample.  
To resolve this, we identify a structural condition on the control set.  
When the control set is countable, convergence $V_{\varepsilon} \to V$ holds locally uniformly.  
\end{abstract}
\keyphrases{measurable drift, optimal control,  HJB equation, dynamic programming principle}
\AMclass{93E20, 35Q93}

\section{Introduction}

In this paper we study the optimal control problem for the following stochastic differential equation (SDE)
\begin{subequations}\label{eq:oc_problem}
\begin{equation}
\md X_{t}=b(t,X_{t},\alpha_{t})\md t+\sqrt{2}\md W_{t},\quad X_{s}=x\in\mathbb{R}^{d},\label{eq:state}
\end{equation}
subject to the cost functional
\begin{equation}
J(\alpha_{\cdot};s,x)=\mathbb{E}\int_{s}^{T}f(t,X_{t},\alpha_{t})\md t.\label{eq:cost}
\end{equation}
The value function is introduced informally as
\begin{equation}\label{eq:value}
  V(s,x)
  \;=\;
  \inf_{\alpha_{\cdot}}
  J(\alpha_{\cdot};s,x),\quad (s,x)\in [0,T]\times \mathbb R^d.
\end{equation}
The precise weak formulation of admissible controls will be given below.
\end{subequations}
A distinctive feature of this work is that the drift coefficient 
$b$ and the running cost $f$ are assumed 
to be only measurable in the state variable---no smoothness or continuity is required. 
To the best of our knowledge, this situation has not been systematically
studied in the HJB--Sobolev framework.

SDEs with irregular drift coefficients model phenomena such as sudden switches
or singular forces; see \cite{kloeden1995numerical}. 
In such settings, the classical Lipschitz-based diffusion theory fails.
Still, by leveraging the Zvonkin transform and Krylov's estimate,
Veretennikov~\cite{Veretennikov1981} gave the strong existence and uniqueness
when the drift was bounded and measurable. 
Subsequent studies have extended these results to wider classes of drift coefficients (see e.g. \cite{KrylovRockner2005,Zhang2005,fedrizzi2011pathwise,Zhang2011,Xia2020}). 
 These results highlight both the difficulties and the potential of studying
optimal control problems with irregular coefficients, which we want to address in the present paper.
 
One concrete motivation comes from the controlled transport of particles in a
fluid.  In a Lagrangian description, a diffusive tracer particle moving in a
velocity field \(\mathbf{u}\) is naturally modeled by
\[
\md X_t=\mathbf{u}(t,X_t)\,\md t+\sqrt2\,\md W_t .
\]
Such stochastic Lagrangian descriptions are standard in fluid mechanics and
turbulent dispersion; see, for instance,
\cite{ConstantinIyer2008,Pope1994}. In many control problems, the control acts
by changing parameters of the ambient flow field, rather than by directly
forcing each particle. For example, in mixing problems the velocity field may be
generated by time-modulated force fields or other external inputs
\cite{MathewEtAl2007}. From this viewpoint, \(b(t,x,a)\) represents a controlled
velocity field. Since shear flows, including Couette-type profiles, form a basic
class in Euler dynamics \cite{BedrossianMasmoudi2015}, controlled or
coarse-grained descriptions of such flows may naturally lead to drift
coefficients with weak spatial regularity. This provides a physical motivation
for studying control-dependent merely measurable drifts.

The problem poses key difficulties due to the lack of regularity in the drift.
First, without differentiability in the state, Pontryagin's maximum principle
cannot be applied in its standard form, since one cannot directly form the
usual adjoint equations or carry out the pointwise maximization of the
Hamiltonian. Second, the dynamic programming approach---based on the Bellman
optimality principle and viscosity solution theory---is also not directly
applicable, as these arguments typically require suitable continuity or
Lipschitz conditions on the coefficients, which are not available in our
setting; see, e.g., \cite{YongZhou1999}.  This motivates us to seek a route which does not rely directly on either the
standard maximum principle or the classical dynamic programming principle.  

 The present paper advances the HJB analysis of stochastic
control problems with irregular coefficients by treating finite-horizon
control-dependent merely measurable drifts under a global $L^p$-integrability
condition. 
Our approach circumvents the conventional reliance on the dynamic programming principle and instead proceeds in the opposite direction. 
We begin by formulating the following HJB equation associated with the control problem:
\begin{equation}\label{eq:hjb}
    \partial_{s}u +\Delta u+ \inf_{a\in A} \big\{b(s,x,a)\cdot\nabla u+f(s,x,a) \big\}=0,
    \quad u(T,x)=0,
\end{equation}
and show that it admits a unique Sobolev solution under our assumptions, and the value function $V(\cdot,\cdot)$ of the control problem coincides with this solution (Theorem~\ref{Thm:exi-uni}). 
The core of our proof relies on a policy iteration argument, which allows us to connect the control problem to the HJB equation directly, despite the irregularity of the coefficients.
With this identification in hand, the Bellman optimality principle follows as a corollary (Theorem~\ref{thm:Bellman}). 
This fact may be useful for constructing discrete-time approximations of the control problem, which we plan to investigate in future work.
Finally, the identification of the value function and HJB solution yields a verification theorem for near-optimal feedback laws (Theorem~\ref{thm:verification}). 
In particular, although the value function is defined over weak open-loop
control systems, it coincides with the value obtained by restricting to Borel
feedback controls (Corollary~\ref{cor:feedback-value}).

In the second part of the paper, we examine a mollification-based approximation scheme.  
A well-known approach for solving SDEs with irregular drift is to smooth the drift coefficient, solve the regularized equation, and then pass to the limit as the smoothing parameter \(\varepsilon\to 0\).  
This naturally raises the question whether 
the same idea works for our optimal control problem.  
Denoting by \(V_{\varepsilon}\) the value functions of
the mollified control problems, we find 
the convergence behavior to be \emph{subtle}: the smoothed value functions \(V_{\varepsilon}\) do not necessarily converge to the original value function $V$, as $\varepsilon\to 0$.  An explicit counterexample is constructed to demonstrate this (Example~\ref{ex:failure-Lp}).  The same construction also exhibits a bounded-coefficient limiting case outside the global \(L^p\) framework.
We prove in Proposition~\ref{thm:liminf} that
\(
\liminf_{\varepsilon\to 0} V_{\varepsilon} \ge V,
\)
yet the opposite inequality may fail without additional structure. 
Interestingly, we find that if the control set
is countable, then \(\limsup_{\varepsilon\to 0} V_{\varepsilon} \le V\) (Proposition~\ref{Prop:V>V_n}).
Thus, under this structural condition, the classical smoothing approach remains reliable for the
analysis and computation.

In recent years, several related works have addressed stochastic control
problems with nonsmooth coefficients from complementary viewpoints; see, for
example, \cite{MenoukeuPamen2023,DeFeo2025,Criens2025,BogsoEtAl2025}.
Menoukeu-Pamen and Tangpi
\cite{MenoukeuPamen2023} established a stochastic maximum principle for
controlled SDEs with measurable drifts. In their framework, the drift is
decomposed as
$
b(t,x,a)=b_1(t,x)+b_2(t,x,a),
$
where the singular component $b_1$ is bounded, Borel measurable, and
independent of the control, while the control-dependent component $b_2$
satisfies additional smoothness assumptions. More recently, Bogso et al.
\cite{BogsoEtAl2025}
studied a stochastic maximum principle for controlled diffusion systems with
drifts of the form
$
b(t,x,a)=b_1(t,x)+b_2(x)b_3(t,a),
$
where $b_1$ is bounded and Borel measurable, $b_2$ is of bounded variation in
the state variable, and $b_3$ is bounded and smooth. Their proof relies on
Malliavin--Sobolev differentiability of the state process, an explicit
representation of the first variation in terms of space--time local time, and a
smooth approximation argument.

Another closely related line is PDE-based. De Feo \cite{DeFeo2025} studies
infinite-horizon discounted, fully nonlinear stochastic control problems with
time-homogeneous measurable coefficients and locally uniformly elliptic
diffusion by means of $L^p$-viscosity solutions. In that framework, the
solvability of the state equation for admissible controls is incorporated into
the admissibility requirement, while closed-loop solvability and uniqueness of
the HJB solution are obtained under additional boundedness, ellipticity, and
selection assumptions through verification arguments. We also mention the
recent semigroup approach of Criens \cite{Criens2025}, which treats controlled
SDEs with measurable coefficients, uniformly elliptic diffusion, and an
$L_d$-drift, and relates the value functions to $L^p$-viscosity solutions.

 The present paper is complementary to these works both in method and in
the nature of the assumptions. Unlike the maximum-principle approaches
in \cite{MenoukeuPamen2023,BogsoEtAl2025}, we do not derive Pontryagin-type necessary conditions,
but instead characterize the value function through a Sobolev solution
of the associated HJB equation. Compared with the PDE and
$L^p$-viscosity approaches in \cite{DeFeo2025,Criens2025}, which allow rough,
uniformly elliptic, and possibly state- and control-dependent diffusion
coefficients, we consider finite-horizon, time-inhomogeneous problems
with additive nondegenerate noise. Consequently, the second-order term
in our HJB equation is linear, whereas a control-dependent diffusion
generally leads to a fully nonlinear HJB equation. The additive-noise
structure allows us to work in a single global unweighted
$L^p(Q_T)$ Sobolev framework, to establish the solvability of the
controlled state equation and the required Krylov estimates directly,
and to identify the value function with the unique solution in
$W_p^{1,2}(Q_T)$.

The global $L^p(Q_T)$ assumption in the present paper is not a strict
generalization of the bounded measurable-drift setting considered in
\cite{MenoukeuPamen2023}. Indeed, since
$Q_T=[0,T]\times\mathbb{R}^d$ has infinite Lebesgue measure, a bounded
measurable function on $Q_T$ need not belong to $L^p(Q_T)$. Conversely,
our assumption permits certain unbounded coefficients with sufficient
global $L^p$ integrability. Thus, the two frameworks are complementary
and, in general, incomparable.

A direct extension of the present results to the infinite-horizon
discounted setting considered in \cite{DeFeo2025} is not immediate,
since two distinct difficulties arise. First, extending our additive-noise
framework to measurable, possibly control-dependent diffusion
coefficients would require substantially different probabilistic and
analytic tools. At the probabilistic level, one would need either
well-posedness and Krylov-type estimates that are uniform over the
admissible controls, or a weak or relaxed formulation together with an
appropriate Markov-selection principle. At the PDE level, the control
dependence of the diffusion coefficient leads to a fully nonlinear HJB
equation, for which suitable Sobolev regularity, comparison, and
uniqueness results with measurable second-order coefficients would be
needed.

Second, the passage from a finite to an infinite time horizon cannot be
justified under Assumption~\ref{ass:main} alone. The estimates in the present paper
are obtained on each fixed cylinder $Q_T$, and Assumption~\ref{ass:main} does not
provide the uniform-in-horizon bounds needed to pass to the limit as
$T\to\infty$. An infinite-horizon extension would therefore require
well-posedness of the controlled state equation on the infinite time
interval, integrability of the discounted cost, and Krylov and Sobolev
estimates that remain uniform with respect to the horizon. In the
time-homogeneous case, the finite-horizon terminal-value problem would
be replaced by a stationary discounted HJB equation. Its analysis would
require an appropriate global solution class and, when unbounded
solutions are allowed, suitable growth conditions at spatial infinity.
These issues require additional assumptions and new arguments, and constitute an interesting direction for future research.

Accordingly, in the sequel we work under the concrete global integrability
condition
$
b,f\in L^p(Q_T)  $ with $ p>d+2,
$
which is compatible with both the well-posedness theory for irregular SDEs and
the parabolic Sobolev estimates used in our analysis. Extensions to weaker
integrability frameworks, locally integrable coefficients with controlled
growth, or more general diffusion matrices would require sharper analytical
estimates and are left for our future investigation.

We next introduce the notation and some preliminary results, and then give the
precise formulation of the controlled problem.

\section*{Preliminaries}

Fix $T>0$ and set $Q_T=[0,T]\times\mathbb R^d$. A stochastic basis below means
a complete filtered probability space
$(\Omega,\mathcal F, \{\mathcal F_t\}_{t\in[0,T]},\mathbb P)$
satisfying the usual conditions and supporting a $d$-dimensional Brownian
motion $W$.

 We first recall the following well-posedness facts for state equations with
measurable drifts.  The weak-existence part follows from the standard
nondegenerate theory for SDEs with merely measurable drift, whose
bounded-coefficient form is discussed in Yong--Zhou~\cite[Theorem~1.6.13]{YongZhou1999}; under the present $L^p$ envelope it follows by the
usual truncation, Girsanov, and Krylov-estimate argument.  The strong part for
deterministic singular drifts follows from
\cite[Theorems~1.1 and~2.2]{Zhang2011}.  Both forms will be used below.

\begin{lem}[Solvability and Krylov estimate for state equations]
\label{lem:state-equation-solvability}
Let $E$ be a standard Borel space, let $B:Q_T\times E\to\mathbb R^d$ be Borel
measurable, and suppose that
$$
|B(t,x,e)|\le \Psi(t,x),\qquad (t,x,e)\in Q_T\times E,
$$
where $\Psi\in L^p(Q_T)$ with $p>d+2$. Fix $(s,x)\in Q_T$.

{\rm(i)} For every $E$-valued progressively measurable process
$\beta=\{\beta_t\}_{t\in[s,T]}$, the SDE
$$
\md X_t=B(t,X_t,\beta_t)\,\md t+\sqrt{2}\,\md W_t,
\qquad X_s=x,\quad t\in[s,T],
$$
admits a weak solution, possibly on an extension of the stochastic basis.
Moreover, for every $h\in L^p(Q_T)$, every weak solution satisfies the Krylov
estimate
\begin{equation}\label{eq:krylov-weak}
\mathbb E\int_s^T |h(t,X_t)|\,\md t
\le C\|h\|_{L^p} .
\end{equation}

{\rm(ii)} If $\theta:Q_T\to E$ is Borel measurable, then the closed-loop SDE
$$
\md X_t=B(t,X_t,\theta(t,X_t))\,\md t+\sqrt{2}\,\md W_t,
\qquad X_s=x,\quad t\in[s,T],
$$
admits a unique strong solution. This solution also
satisfies \eqref{eq:krylov-weak}.

In both cases the constant $C$ depends only on $T,d,p$ and
$\|\Psi\|_{L^p}$.
\end{lem}

\begin{rem}[No weak uniqueness for general random controls]
For a general progressively measurable control, weak uniqueness should not be
expected.  Indeed, let \(d=1\), \(E=\mathbb R\),
\(\varphi(x):=e^{-x^2}\), and
\[
B(t,x,e):=\varphi(x){\bf 1}_{\{x>e\}} .
\]
Then \(B\) is bounded and is dominated by the \(L^p\)-function
\(\varphi\), for every \(p\ge1\). On a stochastic basis carrying a Brownian
motion \(W\), set \(\beta_t=\sqrt2 W_t\) and \(X_0=0\). The equation becomes
\[
\md X_t=\varphi(X_t){\bf 1}_{\{X_t>\sqrt2W_t\}}\,\md t+\sqrt2\,\md W_t .
\]
Clearly \(X_t=\sqrt2W_t\) is a solution. On the other hand, for any
\(\tau\in[0,T)\), let \(Y_t=0\) for \(t\le\tau\) and, for \(t>\tau\), let
\[
\dot Y_t=\varphi(Y_t+\sqrt2W_t),\qquad Y_\tau=0 .
\]
Then \(X_t=\sqrt2W_t+Y_t\) is another solution driven by the same \(W\) and the
same control \(\beta\). Thus the open-loop part of the lemma is used only for
weak existence and the Krylov estimate; uniqueness is asserted only in the
feedback case of part {\rm(ii)}.
\end{rem}

\subsection*{Problem Formulation}\label{sec:setting}

 Let $A$ be a nonempty Borel subset of a Polish space. This choice ensures
that $A$ has a standard Borel structure, which is needed for the measurable
selection arguments used below. 
Throughout the paper we impose the following standing assumption.

\begin{condition}\label{ass:main}
Fix \(p>d+2\).
The mapping
\((b,f):{Q_T}\times A
        \to\mathbb{R}^{d}\times\mathbb{R}\)
is Borel measurable, and there exists a Borel function
\(\Phi\in L^{p}({Q_T})\) such that, for all $(t,x,a)\in {Q_T}\times A$,
$$|b(t,x,a)| + |f(t,x,a)|\leq \Phi(t,x).$$ 
\end{condition}

 We use a weak formulation for the control problem. For
\(0\le s<r\le T\) and \(x\in\mathbb R^d\), an admissible weak control system on
\([s,r]\) starting from \(x\) is a tuple
\[
\mathbb A=(\Omega,\mathcal F,\{\mathcal F_t\}_{t\in[s,r]},\mathbb P,W,\alpha,X)
\]
such that the stochastic basis satisfies the usual conditions, \(W\) is a
\(d\)-dimensional Brownian motion, \(\alpha\) is an \(A\)-valued progressively
measurable process, and \(X\) is an adapted continuous process satisfying
\[
X_t=x+\int_s^t b(\tau,X_\tau,\alpha_\tau)\,\md \tau+\sqrt2\,(W_t-W_s),
\qquad t\in[s,r],
\]
in the weak sense. We denote this class by \(\mathfrak A_{s,r}(x)\). It is
nonempty by Lemma~\ref{lem:state-equation-solvability}{\rm(i)}, and every
\(\mathbb A\in\mathfrak A_{s,r}(x)\) satisfies the Krylov estimate
\begin{equation}\label{eq:krylov-admissible}
\mathbb E^{\mathbb A}\int_s^r |h(t,X_t)|\,\md t
\le C\|h\|_{L^p },\quad h\in L^p(Q_T),
\end{equation}
with \(C\) depending only on \(T,d,p\) and \(\|\Phi\|_{L^p}\). 

\smallskip

For \(\mathbb A\in\mathfrak A_{s,T}(x)\), define
\[
J(\mathbb A;s,x):=
\mathbb E^{\mathbb A}\int_s^T f(t,X_t,\alpha_t)\,\md t .
\]
Then Assumption~\ref{ass:main} and \eqref{eq:krylov-admissible} give
\begin{equation}\label{eq:value_bound}
|J(\mathbb A;s,x)|
\le
\mathbb E^{\mathbb A}\int_s^T \Phi(t,X_t)\,\md t
\le
C\|\Phi\|_{L^p}.
\end{equation}
Thus the value function
\[
V(s,x):=\inf_{\mathbb A\in\mathfrak A_{s,T}(x)}J(\mathbb A;s,x)
\]
is finite and the optimal control problem \eqref{eq:oc_problem} is well
defined. 

\smallskip

\begin{rem}
This formulation keeps the usual open-loop admissible controls but does not
require weak uniqueness for the state equation.  When the control is of feedback
form, \(\alpha_t=\theta(t,X_t)\) with a Borel map \(\theta:Q_T\to A\), the
closed-loop drift
\[
B^\theta(t,x):=b(t,x,\theta(t,x))
\]
is deterministic and satisfies \(|B^\theta(t,x)|\le\Phi(t,x)\). Hence
Lemma~\ref{lem:state-equation-solvability}{\rm(ii)} gives a unique strong
solution on any prescribed stochastic basis. Such a feedback control therefore
induces an admissible weak control system; its cost will be denoted simply by
\(J(\theta;s,x)\), or by \(J(\alpha;s,x)\) when the feedback process is clear
from the context.
\end{rem}
 
We remark that, although throughout the paper we take the state space to be the full space $\mathbb R^d$, our arguments and conclusions remain valid for compact state spaces as well. 
A typical example is the torus $\mathbb T^d$, where the periodic setting provides a natural compact framework.
\smallskip

 The remainder of the paper is structured as follows.
In Section~\ref{sec:HJB}, we prove that the value function \(V\) is the unique
Sobolev solution of the HJB equation~\eqref{eq:hjb} and establish the dynamic
programming principle, together with a verification theorem.
Section~\ref{sec:mollify} analyzes a mollification scheme and studies the
related convergence properties.
Appendix~A contains a whole-space weak maximum principle and the corresponding
weak comparison principle for the HJB equation.

\section{Value Function, HJB Equation, and Bellman  Principle}\label{sec:HJB}

Having established the well-posedness of the control problem, we now turn to its analytical structure.  
Our first goal is to characterize the value
function \(V\) as the unique Sobolev solution of the HJB equation~\eqref{eq:hjb}.
Building on this characterization, we then recover the Bellman principle of optimality (i.e., the dynamic programming principle) and prove a verification theorem that yields near-optimal feedback controls.

\subsection{Main Results}\label{subsec:hjb-main}

Our first theorem shows that the value function is precisely the (strong) solution of the HJB equation and that this solution is unique in the natural Sobolev class.
For this, we recall the standard parabolic Sobolev space $W_p^{1,2}(Q_T)$, endowed with the
norm
$$
\|u\|_{W_p^{1,2}}
:=
\|u\|_{L^p}
+\|\partial_t u\|_{L^p}
+\|\nabla u\|_{L^p}
+\|D^2 u\|_{L^p} .
$$
For $\alpha\in(0,1)$, the notation
$C_{t,x}^{(1+\alpha)/2,1+\alpha}(Q_T)$ denotes the parabolic H\"older space
with exponent $(1+\alpha)/2$ in time and $1+\alpha$ in space. Its norm may be
taken as
$$
\begin{aligned}
\|u\|_{C_{t,x}^{(1+\alpha)/2,1+\alpha} }
:={}&
\|u\|_{L^\infty }
+\|\nabla u\|_{L^\infty }
+
\sup_{\substack{t\ne s\\ x\in\mathbb R^d}}
\frac{|u(t,x)-u(s,x)|}{|t-s|^{(1+\alpha)/2}}
\\ 
&+
\sup_{\substack{(t,x)\ne(s,y)}}
\frac{|\nabla u(t,x)-\nabla u(s,y)|}
{|t-s|^{\alpha/2}+|x-y|^\alpha}.
\end{aligned}
$$

\begin{theorem}\label{Thm:exi-uni}
Under Assumption \ref{ass:main}, it holds that
\begin{enumerate}
\item[\rm i)] HJB equation (\ref{eq:hjb}) admits a unique Sobolev solution $u\in W_p^{1,2}(Q_T)$.
\item[\rm ii)] The value function $V$ coincides with the continuous version of the solution $u$ of HJB equation (\ref{eq:hjb}).
\end{enumerate}
\end{theorem}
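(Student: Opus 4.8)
The plan is to handle the two assertions with different machinery. Write $H(t,x,p):=\essinf_{a\in A}\{b(t,x,a)\cdot p+f(t,x,a)\}$ for the Hamiltonian in \eqref{eq:hjb}; for fixed $(t,x)$ it is concave and $\Phi(t,x)$-Lipschitz in $p$. Existence in part~(i) will come from a monotone \emph{policy iteration} built on linear parabolic theory; uniqueness from linearizing $H$ and invoking the comparison principle for linear equations with $L^p$ drift; and the identification in part~(ii) from the It\^o--Krylov formula together with a measurable selection of near-optimal feedbacks. The recurring benefit of the hypothesis $p>d+2$ is that every $w\in W_p^{1,2}(Q_T)$ has a continuous version with continuous gradient, so $\nabla w$ can be evaluated pointwise and composed along trajectories; and Krylov's estimate guarantees that the state process charges no Lebesgue-null set, which is exactly what lets one pass between the $\essinf$ in \eqref{eq:hjb} (an almost-everywhere statement) and pointwise inequalities along $X$.

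For existence I begin from an arbitrary Borel feedback $a_0\colon Q_T\to A$ and set $b_0(t,x)=b(t,x,a_0(t,x))$ and $f_0(t,x)=f(t,x,a_0(t,x))$, both dominated by $\Phi\in L^p$. The linear backward problem $\partial_s u_0+\Delta u_0+b_0\cdot\nabla u_0+f_0=0$, $u_0(T)=0$, is uniquely solvable in $W_p^{1,2}(Q_T)$ with an a priori bound whose constant depends only on $d,p,T$ and $\|\Phi\|_{L^p(Q_T)}$, hence uniformly over all feedbacks. Given $u_n$, the \emph{policy improvement} step selects (by measurable selection) a feedback $a_{n+1}$ attaining the $\essinf$ in $\{b(t,x,a)\cdot\nabla u_n+f(t,x,a)\}$ up to an error $\varepsilon_n\to 0$, and solves the corresponding linear equation for $u_{n+1}$. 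The improvement makes $u_n$ a supersolution of the $(n+1)$st linear problem, so the linear comparison principle gives the monotonicity $u_{n+1}\le u_n$. Since the iterates lie in a fixed ball of $W_p^{1,2}(Q_T)$, the compact Sobolev embedding (valid because $p>d+2$) yields locally uniform convergence of $u_n$ and $\nabla u_n$ to a limit $u$ (the whole sequence, by monotonicity), together with weak $W_p^{1,2}$ convergence. To pass to the limit in $\partial_s u_{n+1}+\Delta u_{n+1}+b(t,x,a_{n+1})\cdot\nabla u_{n+1}+f(t,x,a_{n+1})=0$ I rewrite the first-order term as $H(t,x,\nabla u_n)+b(t,x,a_{n+1})\cdot(\nabla u_{n+1}-\nabla u_n)$ plus a remainder bounded by $\varepsilon_n$; the Lipschitz bound $|H(t,x,p)-H(t,x,q)|\le\Phi(t,x)|p-q|$ and dominated convergence (with dominating function a multiple of $\Phi\in L^p$) send every correction to $0$ in $L^p$, leaving the strong solution $\partial_s u+\Delta u+H(t,x,\nabla u)=0$, $u(T)=0$, with $u\in W_p^{1,2}(Q_T)$.

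For uniqueness, two solutions $u,v$ give $w=u-v$ solving $\partial_s w+\Delta w+[H(\cdot,\nabla u)-H(\cdot,\nabla v)]=0$; concavity and Lipschitzness of $H(t,x,\cdot)$ let me write the bracket as $\beta\cdot\nabla w$ with $\beta$ measurable and $|\beta|\le\Phi$, so $w$ solves a homogeneous linear problem with $L^p$ drift and zero terminal data, forcing $w\equiv0$. For part~(ii) I apply the It\^o--Krylov formula to $u(t,X_t)$ along $X=X^{s,x;\alpha}$; using $u(T,\cdot)=0$ and the equation gives $u(s,x)=\E\int_s^T[\,H(t,X_t,\nabla u)-b(t,X_t,\alpha_t)\cdot\nabla u\,]\md t$. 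Because $H(t,x,\nabla u)\le b(t,x,a)\cdot\nabla u+f(t,x,a)$ for a.e.\ $(t,x)$ and every $a$ (the pointwise infimum agrees a.e.\ with the $\essinf$, and $X$ avoids null sets), the integrand is $\le f(t,X_t,\alpha_t)$, so $u(s,x)\le J(\alpha;s,x)$ and thus $u\le V$. Conversely I fix a measurable $\varepsilon$-optimal feedback $a^{\ast}$ in the Hamiltonian; the closed-loop SDE is well-posed since its drift is again dominated by $\Phi$, and the same computation, now yielding the reverse inequality up to $\varepsilon$, gives $u(s,x)\ge J(a^{\ast};s,x)-\varepsilon(T-s)\ge V(s,x)-\varepsilon(T-s)$. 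Sending $\varepsilon\to0$ gives $u\ge V$, hence $V=u$, identified with the continuous version of the solution.

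The principal obstacle, I expect, is the limit passage in the policy iteration: one needs the linear a priori estimate to be \emph{uniform} over feedbacks---so that the monotone iterates remain in a fixed $W_p^{1,2}$ ball---and convergence of the gradients strong enough to control the genuinely nonlinear term $H(t,x,\nabla u_n)\to H(t,x,\nabla u)$, both of which rest on $p>d+2$. A second delicate thread, running through part~(ii), is the reconciliation of the $\essinf$ with pointwise evaluation along $X$: justifying the It\^o--Krylov formula for a merely $W_p^{1,2}$ function, identifying the $\essinf$ with the (universally measurable) pointwise infimum almost everywhere, and extracting a measurable $\varepsilon$-optimal selection whose closed loop remains solvable---each of which leans on Krylov's estimate to ensure the trajectories never see the ambient null sets.
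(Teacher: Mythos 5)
Your overall architecture matches the paper's: existence by policy iteration on linear parabolic problems with $\eps$-near-optimal measurable selectors, and the identification $V=u$ by the It\^o--Krylov formula combined with a measurable near-optimal feedback, exactly as in Proposition~\ref{Prop: well} and the proof of Theorem~\ref{Thm:exi-uni}. Two of your sub-arguments, however, take genuinely different routes. For uniqueness, the paper simply observes that any $W^{1,2}_p$ solution must equal $V$ by part~(ii), whereas you linearize $H(t,x,\nabla u)-H(t,x,\nabla v)=\beta\cdot\nabla w$ with a measurable $|\beta|\le\Phi$ and invoke uniqueness for the resulting linear equation with $L^p$ drift; this is a clean, self-contained PDE argument that does not route through the probabilistic identification, and it is valid (only Lipschitzness of $H$ in $p$ is needed, not concavity). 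For the limit passage in the iteration, the paper runs a chain of comparison arguments (Steps~2--4 of Proposition~\ref{Prop: well}: $u^\infty\le u^\alpha$ for every policy, identification of $u^\infty$ with the linear equation driven by a limit selector, then with the frozen-Hamiltonian equation), while you pass to the limit directly in the equation using the bound $|H(t,x,p)-H(t,x,q)|\le\Phi(t,x)|p-q|$, the locally uniform convergence of $\nabla u_n$ coming from the uniform $C^{(1+\alpha)/2,1+\alpha}$ bound, and weak $L^p$ convergence of $\partial_s u_n+\Delta u_n$; this is more direct and equally correct. One slip to repair: with only $\varepsilon_n\to0$ in the selector, $u_n$ is a supersolution of the $(n+1)$st problem only up to an error of order $\varepsilon_n$, so the comparison principle gives $u_{n+1}\le u_n+C\varepsilon_n$ rather than exact monotonicity; to conclude convergence of the whole sequence you should take $\sum_n\varepsilon_n<\infty$ (the paper uses $2^{-k}(1+|x|^2)^{-\delta}$, which also keeps the error in $L^p$ for the maximum principle) and work with the shifted decreasing sequence $u_n+C\sum_{m>n}\varepsilon_m$. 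With that adjustment your proof is complete.
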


\smallskip

The proof of Theorem~\ref{Thm:exi-uni} is given in the next subsection.
Let us give a comment on the continuity of $u$ and $V$.  
\smallskip

\begin{rem}
Since \(p>d+2\), the Sobolev embedding
\(W^{1,2}_{p}(Q_T)\hookrightarrow C^{\delta}(Q_T)\) ensures that every
\(W^{1,2}_{p}\)-function admits a continuous modification.  
Throughout the paper we always work with this continuous version. 
This choice is essential for applying the generalized It\^o formula 
(cf.~\cite[Theorem~3.7]{KrylovRockner2005}), which requires joint continuity in \((t,x)\).  
In the proof of Theorem~\ref{Thm:exi-uni}, we identify the value function
\(V\) pointwise with the continuous modification of the
solution \(u\) to the HJB equation.  
Consequently, \(V\) inherits the regularity properties of \(u\).
\end{rem}
\smallskip

 \begin{rem}[Nonzero terminal cost]
  A nonzero terminal cost in \eqref{eq:hjb} can also be incorporated within the same
Sobolev framework under a   trace condition. More precisely, if the
cost functional contains an additional terminal term
$\psi(X_T^{s,x;\alpha})$, then the corresponding HJB equation is equipped with
the terminal condition
$
u(T,x)=\psi(x), \ x\in\mathbb R^d .
$
A natural condition on the terminal datum is
$$
\psi\in W_p^{2-2/p}(\mathbb R^d),\qquad p>d+2,
$$
which is the terminal trace space associated with the parabolic Sobolev space
$W_p^{1,2}(Q_T)$.   Moreover, since
$p>d+2$, the Sobolev--Morrey embedding yields
$
W_p^{2-2/p}(\mathbb R^d)\hookrightarrow C_b(\mathbb R^d).
$
Thus $\psi$ admits a bounded continuous representative. Consequently, for every
\(\mathbb A\in\mathfrak A_{s,T}(x)\),
\[
\mathbb E^{\mathbb A}|\psi(X_T)|<\infty .
\]

If $\psi$ is only bounded and measurable, it is not directly covered by the
Sobolev trace framework. Treating such terminal costs would require additional
arguments, for instance through suitable approximation and stability results,
or through an $L^p$-viscosity solution approach. This will be an interesting
direction for future research.
\end{rem}

 \smallskip

In this low-regularity setting, the Bellman principle of optimality follows \emph{as a corollary} of the preceding theorem rather than serving as its starting assumption.
\begin{thm}[Principle of optimality]\label{thm:Bellman}
Under Assumption~\ref{ass:main}, for all $(s,x)\in Q_T$ and $t\in[s,T]$,
\begin{equation}\label{eq:DPP}
V(s,x)=\inf_{\mathbb A\in\mathfrak A_{s,t}(x)}\,
\mathbb E^{\mathbb A}\Big[\int_s^{t} f(r,X_r,\alpha_r)\,\mathrm dr
+ V\big(t,X_t\big)\Big],
\end{equation}
where \((\alpha,X)\) denotes the control-state pair in the admissible system
\(\mathbb A\).
\end{thm}

\begin{proof}
By Theorem~\ref{Thm:exi-uni}, the value function $V\in W^{1,2}_p(Q_T)$ solves the HJB equation \eqref{eq:hjb} and admits a continuous modification. Fix $t\in[s,T]$.

{(``$\le$'')} Let \(\mathbb A\in\mathfrak A_{s,t}(x)\). By the Krylov estimate \eqref{eq:krylov-admissible}, the generalized It\^o formula
(cf.~\cite[Theorem~3.7]{KrylovRockner2005}) is applicable to
$V(r,X_r)$. Thus
\[
\begin{aligned}
V\big(t,X_t\big)
&=V(s,x)
+\!\int_s^{t}\!\sqrt{2}\,\nabla V\big(r,X_r\big)\,\mathrm dW_r \\
&\quad+\!\int_s^{t}\!\big[(\partial_r V+\Delta V)\big(r,X_r\big)
+b\big(r,X_r,\alpha_r\big)\!\cdot\!\nabla V\big(r,X_r\big)\big]\,\mathrm dr \\
&\ge V(s,x) - \!\int_s^{t}\! f\big(r,X_r,\alpha_r\big)\,\mathrm dr
+\!\int_s^{t}\!\sqrt{2}\,\nabla V\big(r,X_r\big)\,\mathrm dW_r,
\end{aligned}
\]
where the inequality uses that $V$ solves the HJB equation.
Taking expectations (the stochastic integral has mean $0$) yields
\[
V(s,x)\le \mathbb E^{\mathbb A}\Big[\int_s^{t} f(r,X_r,\alpha_r)\,\mathrm dr
+ V\big(t,X_t\big)\Big].
\]
Now take the infimum over \(\mathbb A\in\mathfrak A_{s,t}(x)\) to obtain the
``$\le$'' direction of \eqref{eq:DPP}.

\smallskip

{(``$\ge$'')} By a measurable selection (Filippov-type) argument, for each $\epsilon>0$, there exists a measurable selector
\(\alpha^\epsilon:[s,t]\times\R^d\to A\) such that, for a.e.\ $(r,y)\in[s,t]\times\R^d$,
\begin{equation}\label{inequ: bf-leq-1}
\begin{aligned}
& b(r,y,\alpha^\epsilon(r,y))\cdot\nabla V(r,y)+f(r,y,\alpha^\epsilon(r,y))\\
& \le \inf_{a\in A}\!\big\{b(r,y,a)\cdot\nabla V(r,y)+f(r,y,a)\big\}+C\epsilon, 
\end{aligned}
\end{equation}
with $C$ independent of $\epsilon$. 
Hence, by Lemma~\ref{lem:state-equation-solvability}{\rm(ii)}, the closed-loop equation
$$
\md X_r^\epsilon
=b(r,X_r^\epsilon,\alpha^\epsilon(r,X_r^\epsilon))\,\md r
+\sqrt2\,\md W_r,\qquad X_s^\epsilon=x,
$$
admits a unique strong solution and satisfies the Krylov estimate. Then
the feedback control
$ 
\alpha_r^\varepsilon:=\alpha^\varepsilon(r,X_r^\varepsilon),
$ $r\in[s,t],
$ 
induces an admissible weak control system in \(\mathfrak A_{s,t}(x)\).

Applying the generalized It\^o formula to $V(r,X_r^\epsilon)$ and using \eqref{inequ: bf-leq-1} together with the HJB equation gives
\[
\begin{aligned}
V\big(t,X_t^\epsilon\big)
&\le V(s,x) - \!\int_s^{t}\!\!\big[f\big(r,X_r^\epsilon,\alpha_r^\epsilon\big)-C\epsilon\big]\,\mathrm dr\\
& \quad +\!\int_s^{t}\!\sqrt{2}\,\nabla V\big(r,X_r^\epsilon\big)\,\mathrm dW_r.
\end{aligned}
\]
Taking expectations yields
\[
V(s,x)\ge \E\Big[\int_s^{t} f\big(r,X_r^\epsilon,\alpha_r^\epsilon\big)\,\mathrm dr
+ V\big(t,X_t^\epsilon\big)\Big]- C\epsilon(t-s).
\]
Since $\epsilon>0$ is arbitrary, we obtain the ``$\ge$'' direction of \eqref{eq:DPP}.
\end{proof}
\medskip

 Because the Hamiltonian may fail to attain its minimum under merely measurable
data, we use $\varepsilon$-selectors. Define
$$
H(t,x,P,a):=b(t,x,a)\cdot P+f(t,x,a),
\qquad (t,x,P,a)\in Q_T\times\mathbb R^d\times A.
$$
Since $A$ is a standard Borel space and $H$ is Borel measurable, the measurable
selection theorem (see, for example, \cite[Proposition~7.50]{BertsekasShreve1978}) yields that, for every $\epsilon>0$, there exists a
universally measurable map
$ 
\alpha^\epsilon:Q_T\times\mathbb R^d\to A
$ 
such that
\begin{equation}\label{eq:eps-selector}
H(t,x,P,\alpha^\epsilon(t,x,P))
\le
\inf_{a\in A}H(t,x,P,a)+\epsilon,
\qquad (t,x,P)\in Q_T\times\mathbb R^d .
\end{equation}
Once $P$ is fixed as a Borel measurable function of $(t,x)$, the feedback
selector
$
(t,x)\mapsto \alpha^\epsilon(t,x,P(t,x))
$
is universally measurable. Since $A$ is a standard Borel space, this feedback
selector can be modified on a Lebesgue null set of $Q_T$ to be Borel measurable
in $(t,x)$. In what follows, we use this Borel version, and the selection
inequality \eqref{eq:eps-selector} is understood to hold a.e. on $Q_T$.

Based on the above, we use $\epsilon$-selectors to construct near-optimal
controls, in the spirit of \cite{LimZhou1999}. 

\begin{definition}[Near-optimality]
 A family of admissible weak control systems \(\{\mathbb A^{\epsilon}\} \subset
\mathfrak A_{s,T}(x)\) is called \emph{near-optimal} with respect to
\((s,x)\in Q_T\), if there is a modulus
\(\rho(\epsilon)\downarrow0\) such that
\(
|J(\mathbb A^{\epsilon};s,x)-V(s,x)|\le\rho(\epsilon)
\)
for every \(\epsilon>0\).  If \(\rho(\epsilon)=C\epsilon^{\delta}\)
with some \(\delta>0\), then the family is said to be near-optimal of order
$\varepsilon^\delta$.
\end{definition}

\smallskip 

The verification theorem in the near-optimal case is presented as follows.

\begin{theorem}\label{thm:verification}
Let Assumption~\ref{ass:main} hold,
and let $u(\cdot,\cdot)\in W_p^{1,2}$ be the solution of the HJB equation \eqref{eq:hjb}. Then 

{\rm(i)} \(u(s,x)\leq J(\mathbb A;s,x)\), for all \((s,x)\in Q_T\) and
\(\mathbb A\in\mathfrak A_{s,T}(x)\);

{\rm(ii)}  For any $\epsilon>0$, let $\alpha^\epsilon$ be the
$\varepsilon$-selector given by \eqref{eq:eps-selector}, 
and let $ X^{\epsilon}   $ solve
\begin{equation}
\md X_t^\epsilon
=
b\bigl(t,X_t^\epsilon,
\alpha^\epsilon(t,X_t^\epsilon,\nabla u(t,X_t^\epsilon))\bigr)\,\md t
+\sqrt2\,\md W_t,
\qquad X_s^\epsilon=x\label{eq:state-feed}.
\end{equation}
Then  the feedback control $\alpha^\epsilon_\cdot=\alpha^\epsilon(\cdot,X^\epsilon_{\cdot},\nabla u(\cdot,X^\epsilon_{\cdot}))$ induces an admissible weak control system and is near-optimal with respect to \((s,x)\in Q_T\) for Problem~\eqref{eq:oc_problem} with order $\epsilon$. More precisely,
$$
J(\alpha^\epsilon;s,x)\le V(s,x)+C\epsilon .
$$
\end{theorem}

\begin{proof}
The assertion (i) follows immediately from Theorem~\ref{Thm:exi-uni}.

For (ii),  by Lemma~\ref{lem:state-equation-solvability}{\rm(ii)},  SDE \eqref{eq:state-feed} admits a unique strong solution $ X^{\epsilon}   $ satisfying the Krylov estimate. Therefore $\alpha^\epsilon_\cdot:=\alpha^\epsilon(\cdot,X^\epsilon_{\cdot},\nabla u(\cdot,X^\epsilon_{\cdot}))$ induces an element of \(\mathfrak A_{s,T}(x)\).  
Then applying the generalized It\^o formula to $u(t,X_t^\epsilon)$, and using \eqref{eq:eps-selector} together with the HJB equation, we obtain for $t\in[s,T]$,
\[
\begin{aligned}
u(s,x)
&=u(T,X_T^\epsilon) -\sqrt{2}\int_s^T \nabla u(r,X_r^\epsilon)\,\mathrm dW_r \\
&\quad -\int_s^T\big[(\partial_r u+\Delta u)(r,X_r^\epsilon)
+b(r,X_r^\epsilon,\alpha_r^\epsilon)\cdot\nabla u(r,X_r^\epsilon)\big]\,\mathrm dr \\
&\ge \int_s^T \big(f(r,X_r^\epsilon,\alpha_r^\epsilon)-\epsilon\big)\,\mathrm dr
-\sqrt{2}\int_s^T \nabla u(r,X_r^\epsilon)\,\mathrm dW_r.
\end{aligned}
\]
Taking expectations eliminates the martingale term, and yields
\[
u(s,x)\ge J(\alpha^\epsilon_\cdot;s,x)-\epsilon(T-s).
\]
Since $V(s,x)=u(s,x)$, this inequality together with (i) shows that $\alpha^\epsilon_\cdot$ is $\epsilon$-optimal with respect to \((s,x)\in Q_T\).
\end{proof}

\begin{cor}\label{cor:feedback-value}
Let Assumption~\ref{ass:main} hold.  For \((s,x)\in Q_T\), define the feedback
value function by
\[
V_{\rm fb}(s,x):=\inf_{\theta} J(\theta;s,x),
\]
where the infimum is taken over all Borel feedback selectors
\(\theta:Q_T\to A\), and \(J(\theta;s,x)\) denotes the cost of the corresponding
closed-loop system. Then
\[
V_{\rm fb}(s,x)=V(s,x),\qquad (s,x)\in Q_T .
\]
\end{cor}

\begin{proof}
Every Borel feedback selector induces an admissible weak control system by
Lemma~\ref{lem:state-equation-solvability}{\rm(ii)}, and hence
\[
V(s,x)\le V_{\rm fb}(s,x).
\]
Conversely, by Theorem~\ref{thm:verification}{\rm(i)}, if \(u\) is the Sobolev
solution of \eqref{eq:hjb}, then
\[
u(s,x)\le J(\theta;s,x)
\]
for every Borel feedback selector \(\theta\).  The
\(\varepsilon\)-selector in Theorem~\ref{thm:verification}{\rm(ii)} gives the
Borel feedback selector
\[
\theta^\varepsilon(t,y)
:=
\alpha^\varepsilon(t,y,\nabla u(t,y)),\quad (t,y)\in [s,T]\times\mathbb R^d,
\]
such that
\[
J(\theta^\varepsilon;s,x)\le V(s,x)+C\varepsilon
=u(s,x)+C\varepsilon .
\]
Letting \(\varepsilon\downarrow0\) yields
\[
V_{\rm fb}(s,x)\le u(s,x).
\]
Since \(u=V\) by Theorem~\ref{Thm:exi-uni}, the claim follows.
\end{proof}

\subsection{Proof of Theorem~\ref{Thm:exi-uni}: Policy Iteration}\label{subsec:PI}

We prove Theorem~\ref{Thm:exi-uni} by a policy iteration scheme tailored to the low-regularity control setting.

\begin{prop}\label{Prop: well}
Under Assumption~\ref{ass:main}, HJB equation \eqref{eq:hjb} admits a solution
$u\in W_{p}^{1,2}(Q_T)$.
\end{prop}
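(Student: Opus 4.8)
The plan is to construct a solution of \eqref{eq:hjb} by a \emph{policy iteration} scheme: linearize the Hamiltonian around the current iterate, solve the resulting linear backward parabolic equation using the $L^p$-theory for equations with singular drift, and then pass to the limit. Write
\[
H(s,x,P):=\essinf_{a\in A}\big\{b(s,x,a)\cdot P+f(s,x,a)\big\}
\]
for the Hamiltonian; as an infimum of affine functions of $P$ it is concave, and since $|b|\le\Phi$ it is $\Phi$-Lipschitz in $P$, i.e.\ $|H(s,x,P)-H(s,x,P')|\le\Phi(s,x)\,|P-P'|$. The engine of the scheme is the linear problem
\[
\partial_s v+\Delta v+\beta\cdot\nabla v+h=0,\qquad v(T,\cdot)=0,
\]
with $\beta,h$ Borel measurable and $|\beta|,|h|\le\Phi\in L^{p}(Q_T)$. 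Since $p>d+2$, this problem admits a unique solution $v\in W_p^{1,2}(Q_T)$, together with a maximum principle and an a priori bound $\|v\|_{W_p^{1,2}(Q_T)}\le C$ with $C=C(d,p,T,\|\Phi\|_{L^{p}})$ \emph{independent of the particular coefficients} $\beta,h$ (cf.\ \cite{KrylovRockner2005,Zhang2011}). This selector-independence is precisely what will make every estimate below uniform in the iteration index.

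Starting from $u_0:=0$, suppose $u_n\in W_p^{1,2}(Q_T)$ is given. By Filippov's measurable selection theorem I would pick $a_{n+1}\colon Q_T\to A$ with
\[
b(s,x,a_{n+1})\cdot\nabla u_n+f(s,x,a_{n+1})\le H(s,x,\nabla u_n)+\epsilon_n,\qquad \epsilon_n:=2^{-n},
\]
and define $u_{n+1}$ as the solution of the linear problem above with $\beta_{n+1}:=b(\cdot,a_{n+1})$ and $h_{n+1}:=f(\cdot,a_{n+1})$. To see that the iteration is \emph{almost monotone}, subtract the equations for $u_{n+1}$ and $u_n$: the difference $w_n:=u_{n+1}-u_n$ solves a linear equation with drift $\beta_{n+1}$ and source $g_n=(\beta_{n+1}\cdot\nabla u_n+h_{n+1})-(\beta_n\cdot\nabla u_n+h_n)$. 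Since $a_{n+1}$ is an $\epsilon_n$-minimizer for $\nabla u_n$ while $a_n$ is merely a competitor in the infimum defining $H(\cdot,\nabla u_n)$, one gets $g_n\le\epsilon_n$. Representing $w_n$ by the Feynman--Kac formula along the diffusion with drift $\beta_{n+1}$ and invoking Krylov's estimate as in \eqref{eq:value_bound} then yields $u_{n+1}-u_n=w_n\le\epsilon_n T$ pointwise. Consequently $u_n+T\sum_{k\ge n}\epsilon_k$ is pointwise nonincreasing; being bounded below by the uniform a priori bound, it forces $u_n\to u$ pointwise for some limit $u$, and weak lower semicontinuity of the $W_p^{1,2}$-norm gives $u\in W_p^{1,2}(Q_T)$.

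It remains to identify $u$ as a solution of \eqref{eq:hjb}, and this is where the choice $p>d+2$ is used decisively. The parabolic Sobolev embedding then controls $u_n$ and $\nabla u_n$ in a Hölder norm, so $\{u_n\}$ is precompact in $C^1_{\mathrm{loc}}$; as the pointwise limit is the unique $u$, the \emph{whole} sequence satisfies $u_n\to u$ and $\nabla u_n\to\nabla u$ locally uniformly. Writing the $(n+1)$-st equation as
\[
\partial_s u_{n+1}+\Delta u_{n+1}+H(\cdot,\nabla u_n)+\theta_n+\beta_{n+1}\cdot(\nabla u_{n+1}-\nabla u_n)=0,\qquad 0\le\theta_n\le\epsilon_n,
\]
I would pass to the limit term by term in $L^{p}$. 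The $\Phi$-Lipschitz bound gives $|H(\cdot,\nabla u_n)-H(\cdot,\nabla u)|\le\Phi\,|\nabla u_n-\nabla u|$, which tends to $0$ in $L^{p}$ by dominated convergence (dominated by $2\,\Phi\,\sup_n\|\nabla u_n\|_\infty\in L^{p}$), and likewise $|\beta_{n+1}\cdot(\nabla u_{n+1}-\nabla u_n)|\le\Phi\,|\nabla u_{n+1}-\nabla u_n|\to0$ in $L^{p}$, while $\theta_n\le\epsilon_n\to0$. Since $u_n\rightharpoonup u$ in $W_p^{1,2}$ so that $\partial_s u_{n+1}+\Delta u_{n+1}\rightharpoonup\partial_s u+\Delta u$ weakly in $L^{p}$, matching the two limits gives $\partial_s u+\Delta u+H(s,x,\nabla u)=0$ with $u(T,\cdot)=0$, as required. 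The \textbf{main obstacle} is exactly this final passage to the limit in the product $\beta_{n+1}\cdot\nabla u_{n+1}$ (and in $H(\cdot,\nabla u_n)$) against a coefficient that is only $L^{p}$: weak convergence of the gradients would not suffice, and it is the compact embedding available for $p>d+2$ that upgrades the gradient convergence to the strong (locally uniform) mode needed. A secondary but essential point, flagged above, is that the $W_p^{1,2}$-estimate must be uniform over all measurable selectors so that compactness can be applied along the entire sequence rather than a subsequence.
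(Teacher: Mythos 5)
Your proposal is correct and follows the same policy-iteration skeleton as the paper: Filippov $\epsilon_k$-selectors, linear $W_p^{1,2}$-solvability with selector-independent bounds, almost-monotonicity of the iterates via the maximum principle (your bound $u_{n+1}-u_n\le \epsilon_n T$ is exactly the paper's \eqref{uk-uk-1}), and the compact Sobolev embedding for $p>d+2$ upgrading to $C^1_{\mathrm{loc}}$ convergence of the whole sequence. Where you genuinely diverge is the identification of the limit. The paper (Steps 2--4 of its proof) never passes to the limit in the iterate's equation directly; instead it proves $u^\infty\le u^\alpha$ for \emph{every} frozen measurable policy $\alpha$, then shows $u^\infty$ coincides with the solution driven by a near-minimizing selector at the limit, and finally compares with an auxiliary linear equation whose source is $H(\cdot,\nabla u^\infty)$. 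You instead rewrite the $(n+1)$-st equation as $\partial_s u_{n+1}+\Delta u_{n+1}+H(\cdot,\nabla u_n)+\theta_n+\beta_{n+1}\cdot(\nabla u_{n+1}-\nabla u_n)=0$ and send each lower-order term to its limit strongly in $L^p(Q_T)$, using the $\Phi$-Lipschitz property of $H$ in $P$ and dominated convergence (with dominating function $C\Phi$, exploiting the uniform $\|\nabla u_n\|_\infty$ bound), while the second-order part converges weakly; matching the strong and weak limits gives the HJB equation. This is shorter and avoids the paper's auxiliary comparisons, at the price of needing the dominated-convergence step to convert locally uniform gradient convergence into global $L^p$ convergence on the unbounded domain $Q_T$ --- which you correctly supply. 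One small point worth noting: you take a constant selection error $\epsilon_n=2^{-n}$, whereas the paper weights it by $(1+|x|^2)^{-\delta}$ with $\delta>\tfrac{d}{2p}$ precisely so that the error lies in $L^p(Q_T)$ and can be fed into maximum-principle estimates of the form $\sup(\cdot)^+\le C\|\cdot\|_{L^p}$. Your argument survives with the constant error only because (a) the one-step comparison uses the pointwise Feynman--Kac bound rather than an $L^p$ bound on the source, and (b) the residual $\theta_n$ is also dominated by $C\Phi\in L^p$, so $\theta_n\to0$ in $L^p$ by dominated convergence even though the constant envelope $\epsilon_n$ is not itself in $L^p(\mathbb{R}^d)$; it would be worth making point (b) explicit.
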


\begin{proof}
By a measurable selection  argument, for each $k\ge1$ and $\delta>\frac{d}{2p}$ there exists a measurable map
\[
\bar\alpha^k:Q_T\times\R^d\to A
\]
such that for all $(s,x,P)\in Q_T\times\R^d$,
\begin{equation}\label{alpha-k}
\begin{aligned}
& b(s,x,\bar\alpha^k(s,x,P))\cdot P+f(s,x,\bar\alpha^k(s,x,P)) \\
& \le \inf_{a\in A}\{b(s,x,a) \cdot P+f(s,x,a)\}+2^{-k}(1+|x|^2)^{-\delta}.  
\end{aligned}
\end{equation}
Choose any $u^0\in W_p^{1,2}(Q_T)$. Given $u^{k-1}$, define the feedback policy
\[
\alpha^{k}(s,x):=\bar\alpha^k\big(s,x,\nabla u^{k-1}(s,x)\big),
\]
so that
\[
\begin{aligned}
& b(s,x, \alpha^{k}(s,x) )\cdot\nabla u^{k-1}(s,x)+f(s,x,  \alpha^{k}(s,x) ) \\
& \le \inf_{a\in A}\{b(s,x,a)\cdot\nabla u^{k-1}(s,x)+f(s,x,a)\}+2^{-k}(1+|x|^2)^{-\delta}.
\end{aligned}
\]

For $k\ge1$, let $u^{k}\in W_p^{1,2}(Q_T)$ solve the linear PDE: 
\begin{equation}\label{eq:u-k}
\Bigg\{
\begin{aligned}
&\partial_s u^{k}+\Delta u^{k}+b(s,x,\alpha^{k}(s,x))\cdot\nabla u^{k} 
+f(s,x,\alpha^{k}(s,x))=0,\ (s,x)\in Q_T,\\
& u^{k}(T,\cdot)=0.
\end{aligned}
\end{equation}
By $L^p$-parabolic regularity (e.g., \cite[Thm.~10.3]{KrylovRockner2005}, \cite[Thm.~5.1]{Zhang2011}),
\[
\|u^{k}\|_{W_p^{1,2}}\le C\|f\|_{L^p}\le C\|\Phi\|_{L^p}.
\]
For simplicity, we denote
\[
b(\alpha) = b(t,x,\alpha(t,x)),\quad f(\alpha) = f(t,x,\alpha(t,x)).
\]

The proof is then carried out in four steps.

\emph{Step 1: one-step comparison.}

From \eqref{eq:u-k} with $k-1$ and the definition of $\alpha^k$, for $k\ge2$,
\[
\begin{aligned}
0&=(\partial_s u^{k-1}+\Delta u^{k-1})+b(\alpha^{k-1})\!\cdot\!\nabla u^{k-1}+f(\alpha^{k-1})\\
&\ge(\partial_s u^{k-1}+\Delta u^{k-1})+b(\alpha^{k})\!\cdot\!\nabla u^{k-1}+f(\alpha^{k})-2^{-k}(1+|x|^2)^{-\delta}.
\end{aligned}
\]
Subtracting the equation for $u^{k}$ yields
\[
\begin{cases}
\partial_s(u^k-u^{k-1})+\Delta(u^k-u^{k-1})+b(\alpha^{k})\!\cdot\!\nabla(u^k-u^{k-1})
+2^{-k}(1+|x|^2)^{-\delta}\ge0,\\
(u^k-u^{k-1})(T,\cdot)=0.
\end{cases}
\]
 By the weak maximum principle (Lemma \ref{lem:whole-space-max}),
\begin{equation}\label{uk-uk-1}
u^{k}(s,x)-u^{k-1}(s,x)\le C\,2^{-k}(T-s),\qquad (s,x)\in Q_T.
\end{equation}
Hence $u^{k}+C2^{-k}(T-s)\le u^{k-1}+C2^{-(k-1)}(T-s)$, so $\{u^{k}+C2^{-k}(T-s)\}$ is pointwise decreasing. 
By monotone convergence, there exists a measurable function $u^{\infty}$ with
\[
u^{k}(s,x) \to u^{\infty}(s,x)\quad\text{for all }(s,x)\in Q_T.
\]

The uniform $W_p^{1,2}$-bound implies $u^k\rightharpoonup u^\infty$ weakly in $W_p^{1,2}(Q_T)$, and $\|u^\infty\|_{W_p^{1,2}}\le C'\|\Phi\|_{L^p}$. Since $p>d+2$, Sobolev embedding yields a H\"older bound
\begin{equation}\label{ineq:u-k}
\sup_{k\ge1}\|u^{k}\|_{C_{t,x}^{(1+\alpha/{2},\,1+\alpha}}\le C\|\Phi\|_{L^p}
\end{equation}
for some $\alpha\in(0,1)$. Consequently, for any compact $\mathcal X\subset\R^d$,
\begin{equation}\label{eq:Vk-C-2}
\lim_{k\to\infty}\ \sup_{[0,T]\times\mathcal X}
\big(|u^k-u^\infty|+|\nabla u^k-\nabla u^\infty|\big)=0.
\end{equation}

\emph{Step 2: $u^\alpha \ge u^\infty$ for arbitrary measurable $\alpha:Q_T\to A$.}

Fix a measurable $\alpha$ and let $u^\alpha\in W_p^{1,2}(Q_T)$ solve
\[
\partial_s u^\alpha+\Delta u^\alpha+b(\alpha)\cdot\nabla u^\alpha+f(\alpha) =0,\quad
u^\alpha(T,\cdot) = 0.
\]
Recall that $u^k$ satisfies \eqref{eq:u-k} with the frozen policy $\alpha^k$.  
By the near-minimization property \eqref{alpha-k} with $P=\nabla u^{k-1}(s,x)$ we have
\[
b(\alpha^k)\cdot\nabla u^{k-1}+f(\alpha^k)
\le b(\alpha)\cdot\nabla u^{k-1}+f(\alpha)+2^{-k}(1+|x|^2)^{-\delta}.
\]

Subtracting the equation of $u^\alpha$ from that of $u^k$ and rearranging yields
\begin{align*}
&\partial_s(u^k-u^\alpha)+\Delta(u^k-u^\alpha)
+b(\alpha)\cdot\nabla(u^k-u^\alpha) \\
&\qquad\ge [b(\alpha^k)-b(\alpha)]\cdot(\nabla u^{k-1}-\nabla u^k)
-2^{-k}(1+|x|^2)^{-\delta}.
\end{align*}
Since \eqref{ineq:u-k} and $\nabla u^k\to\nabla u^\infty$ locally uniformly by \eqref{eq:Vk-C-2}, there exist $\varepsilon_{k-1,k}\to0$ such that
\[
\left\|
[b(\alpha^k)-b(\alpha)]
\cdot
(\nabla u^{k-1}-\nabla u^k)
\right\|_{L^p }
\le
2\|\Phi\|_{L^p }\cdot \varepsilon_{k-1,k}.
\]
 %
By the weak maximum principle  (Lemma \ref{lem:whole-space-max}),
\[
\sup_{Q_T}(u^k-u^\alpha)^+\le   C\big(2\| \Phi\|_{L^p}\cdot\varepsilon_{k-1,k}+2^{-k}\|(1+|x|^2)^{-\delta}\|_{L^p}\big).
\]
Passing to the limit $k\to \infty$ yields $u^\infty\le u^\alpha$ pointwise on $Q_T$.

\emph{Step 3: near-minimizing selector at the limit.}

Given $\epsilon>0$, by measurable selection there exists $\alpha^{\infty,\epsilon}:Q_T\to A$ such that
\begin{equation}\label{alpha-eps-2-strong}
b(\alpha^{\infty,\epsilon})\cdot\nabla u^\infty+f(\alpha^{\infty,\epsilon})
\le \inf_{a\in A}\{b(a)\cdot\nabla u^\infty+f(a)\}
+\epsilon(1+|x|^2)^{-\delta}.
\end{equation}
Let $\tilde u$ be the unique solution of the linear PDE
\[
\partial_s\tilde u+\Delta\tilde u+b(\alpha^{\infty,\epsilon})\cdot\nabla \tilde u
+f(\alpha^{\infty,\epsilon})=0,\quad
\tilde u(T,\cdot)=0.
\]
A comparison argument similar to Step~2 shows $\tilde u\ge u^\infty$.  
To establish the reverse inequality, we compare $u^k$ and $\tilde u$.  
Using the equations of $u^k$ and $\tilde u$ and \eqref{alpha-eps-2-strong}, one derives
\[
\begin{aligned}
&\partial_s(\tilde u-u^k)+\Delta(\tilde u-u^k)
+b(\alpha^{\infty,\epsilon})\cdot\nabla(\tilde u-u^k)  \ge
- E_k^\epsilon(s,x)-\epsilon(1+|x|^2)^{-\delta},
\end{aligned}
\]
where
$
E_k^\epsilon(s,x)
:=
\left|
\big(b(\alpha^k)-b(\alpha^{\infty,\epsilon})\big)
\cdot
\big(\nabla u^k-\nabla u^\infty\big)
\right|.
$
For each fixed \(\epsilon>0\),  there exists \(\rho_k^\epsilon\to0\) such that
$\displaystyle
\|E_k^\epsilon\|_{L^p}
\le
2\|\Phi\|_{L^p }\,\rho_k^\epsilon .
$
By Lemma~\ref{lem:whole-space-max},
\[
\sup_{Q_T}(\tilde u-u^k)^+
\le
C\left(
2\|\Phi\|_{L^p }\,\rho_k^\epsilon
+
\epsilon\|(1+|x|^2)^{-\delta}\|_{L^p }
\right).
\]
Letting \(k\to\infty\) and using the locally uniform convergence of \(u^k\) to
\(u^\infty\), we get
\[
\sup_{Q_T}(\tilde u-u^\infty)^+
\le
C\epsilon\|(1+|x|^2)^{-\delta}\|_{L^p } .
\]
Finally, letting \(\epsilon\downarrow0\), we obtain
 $
\tilde u\le u^\infty $ in $  Q_T .
$
Since we already know $\tilde u\ge u^\infty$, this shows $\tilde u=u^\infty$.  
Consequently, $u^\infty$ satisfies
\begin{equation}\label{eq:wt-u-infty}
\partial_s u^\infty+\Delta u^\infty+b(\alpha^{\infty,\epsilon})\cdot\nabla u^\infty+f(\alpha^{\infty,\epsilon})=0,\quad
u^\infty(T,\cdot)=0.  
\end{equation}

\emph{Step 4: passage to the HJB.}

Let $\widehat u$ solve
\begin{equation}\label{eq:wt-hat-u}
\Bigg\{\begin{aligned}
&\partial_s \widehat u+\Delta \widehat u+\inf_{a\in A}\big\{b(a)\!\cdot\!\nabla u^\infty+f(a)\big\}=0,\\
& \widehat u(T,\cdot)=0.
\end{aligned}
\end{equation}
Comparing \eqref{eq:wt-u-infty} and \eqref{eq:wt-hat-u} and using \eqref{alpha-eps-2-strong} plus the comparison principle,
\[
0\le u^\infty-\widehat u \le \epsilon(1+|x|^2)^{-\delta}(T-s)\qquad\text{on }Q_T.
\]
Since $\epsilon>0$ is arbitrary, $u^\infty=\widehat u$. Substituting $u^\infty$ into \eqref{eq:wt-hat-u} yields exactly the HJB equation \eqref{eq:hjb} with $u=u^\infty\in W_p^{1,2}(Q_T)$.
The proof is complete.
\end{proof}

\medskip
Now we are in a position to conclude the proof of Theorem~\ref{Thm:exi-uni}.

\begin{proof}[Proof of Theorem~\ref{Thm:exi-uni}]
By Proposition~\ref{Prop: well}, it remains to prove (ii).
Uniqueness of the HJB solution then follows directly from (ii).

\emph{Step 1: $V\ge u$.}
Fix \(\mathbb A\in\mathfrak A_{s,T}(x)\), and let \((\alpha,X)\) be its
control-state pair.
By the generalized It\^o formula, for $t\in[s,T]$,
\[
\begin{aligned}
&u(t,X_t)
 =-\!\int_t^T\!\!\Big[(\partial_r u+\Delta u)(r,X_r)
+b(r,X_r,\alpha_r)\cdot\nabla u(r,X_r)\Big]\mathrm dr\\
&\qquad \qquad\ -\int_t^T\!\!\sqrt{2}\,\nabla u(r,X_r)\,\mathrm dW_r\\
 &\le \int_t^T f(r,X_r,\alpha_r)\,\mathrm dr
-\!\int_t^T\!\!\sqrt{2}\,\nabla u(r,X_r)\,\mathrm dW_r,
\end{aligned}
\]
where we used that $u$ solves the HJB equation \eqref{eq:hjb}. Taking expectations at $t=s$ and using that the stochastic integral is a martingale with zero mean, we obtain
\[
u(s,x)\le \mathbb E^{\mathbb A}\!\int_s^T f(r,X_r,\alpha_r)\,\mathrm dr
=J(\mathbb A;s,x).
\]
Taking the infimum over \(\mathbb A\in\mathfrak A_{s,T}(x)\) yields \(u\le V\).

 {\it Step 2:} $V\le u$. 
Fix $\epsilon>0$, and let $\alpha^\epsilon$ be the $\epsilon$-selector given by
\eqref{eq:eps-selector}. Set
$$
\bar\alpha^\epsilon(t,x):=
\alpha^\epsilon(t,x,\nabla u(t,x)),
\qquad (t,x)\in Q_T .
$$
Taking $P=\nabla u(t,x)$ in \eqref{eq:eps-selector}, we obtain
\begin{equation}\label{inequ: bf-leq}
\begin{aligned}
& b(t,x,\bar\alpha^\epsilon(t,x))\cdot\nabla u(t,x)+f(t,x,\bar\alpha^\epsilon(t,x))\\
&\le \inf_{a\in A}\!\Big\{b(t,x,a)\cdot\nabla u(t,x)+f(t,x,a)\Big\}+\varepsilon. 
\end{aligned}
\end{equation}
 
By Lemma~\ref{lem:state-equation-solvability}{\rm(ii)}, the closed-loop SDE
\begin{equation}\label{eq:state-feedback}
\mathrm d\bar  X_t^\epsilon
=b(t,\bar  X_t^\epsilon,\bar \alpha^\epsilon(t,\bar X_t^\epsilon))\,\mathrm dt+\sqrt{2}\,\mathrm dW_t,
\qquad \bar  X_s^\epsilon=x\in\R^d .
\end{equation}
admits a unique strong solution. Define the feedback control
\(\bar\alpha_t^\epsilon:=\bar\alpha^\epsilon(t,\bar X_t^\epsilon)\).
Applying the generalized It\^o formula to $u(t,\bar X_t^{\epsilon})$ and using \eqref{inequ: bf-leq} together with the HJB equation, we obtain, for $t\in[s,T]$,
\[
\begin{aligned}
&u(t,\bar X_t^{\epsilon})
 =-\!\int_t^T\!\!\Big[(\partial_r u+\Delta u)(r,\bar X_r^{\epsilon})
+b(r,\bar X_r^{\epsilon},\bar \alpha_r^{\epsilon})\cdot\nabla u(r,\bar X_r^{\epsilon})\Big]\mathrm dr \\
&\qquad\qquad \ -\int_t^T\!\!\sqrt{2}\,\nabla u(r,\bar X_r^{\epsilon})\,\mathrm dW_r\\
&\ge -\!\int_t^T\!\!\Big[(\partial_r u+\Delta u)(r,\bar X_r^{\epsilon})
+\inf_{a\in A}\{b(r,\bar X_r^{\epsilon},a)\cdot\nabla u(r,\bar X_r^{\epsilon})+f(r,\bar X_r^{\epsilon},a)\}\\
&\hspace{6.2em}\qquad -\,f(r,\bar X_r^{\epsilon},\bar \alpha_r^{\epsilon})-C\epsilon\Big]\mathrm dr
-\!\int_t^T\!\!\sqrt{2}\,\nabla u(r,\bar X_r^{\epsilon})\,\mathrm dW_r\\
&=\int_t^T\!\!\Big[f(r,\bar X_r^{\varepsilon},\bar \alpha_r^{\epsilon})-C\epsilon\Big]\mathrm dr
-\!\int_t^T\!\!\sqrt{2}\,\nabla u(r,\bar X_r^{\epsilon})\,\mathrm dW_r,
\end{aligned}
\]
where the last equality uses that $u$ solves the HJB equation.
Taking expectations at $t=s$ and using again that the stochastic integral has zero mean, we get, for $(s,x)\in Q_T$,
\[
\begin{aligned}
u(s,x)&\ge \E\!\int_s^T f(r,\bar X_r^{\epsilon},\bar \alpha_r^{\epsilon})\,\mathrm dr - C\varepsilon(T-s)\\
&= J(\bar \alpha^{\epsilon}_{\cdot};s,x)-C\epsilon (T-s) \ge V(s,x)-C\epsilon (T-s) .
\end{aligned}\]
Since $\epsilon>0$ is arbitrary, we conclude $V\le u$.

Combining the two steps gives $V=u$ and completes the proof of (ii).
\end{proof}

\smallskip
 At this stage, we have accomplished the first objective of this study.
Starting from the HJB equation \eqref{eq:hjb}, we have established key
connections in control theory, namely the relationship between the Sobolev
solution of the HJB equation \eqref{eq:hjb} and the value function of
Problem~\eqref{eq:oc_problem}, the dynamic programming principle, and the
stochastic verification theorem under these low regularity assumptions. 

\section{Mollification Scheme and Convergence Analysis}\label{sec:mollify}

A classical approach to SDEs with irregular drift is to smooth the drift coefficient, solve the regularized equation, and then pass to the limit as the smoothing parameter \(\varepsilon\to 0\).   
In our optimal control setting, however, this
approach  behaves in a subtler way under Assumption~\ref{ass:main}. 
Now we begin
by smoothing the data \(b\) and \(f\).

\smallskip
\paragraph{Mollifier}
Fix a non-negative kernel
\(\zeta\in C_{c}^{\infty}(\R\times\R^{d})\) supported in the unit ball and
normalized by $ \iint_{\mathbb R\times\mathbb R^d}\zeta(t,x)\,\md x\,\md t=1.$   
For \(\varepsilon>0\), set
\[
  \zeta_{\varepsilon}(t,x)
  :=\varepsilon^{-(d+1)}
     \,\zeta(t/\varepsilon,x/\varepsilon),\quad (t,x)\in Q_T.
\]
Extend \(g=b,f\) and $\Phi$ by zero for \(t\notin[0,T]\) and define the convolutions
\[
  g_{\varepsilon}(t,x,a)
  :=(\zeta_{\varepsilon}*g(\cdot,\cdot,a))(t,x),\quad (t,x)\in Q_T,\ 
    a\in A .
\]
Then the standard convolution properties and Assumption~\ref{ass:main} show that, for all $(t,x)\in Q_T$ and $a\in A$,
\begin{equation}\label{eq:b-f-eps}
\begin{aligned}
& |b_\varepsilon(t,x,a)|+|f_\varepsilon(t,x,a)|
 \le \Phi_\varepsilon(t,x):=(\zeta_\varepsilon*\Phi)(t,x),\\
& b_\varepsilon(\cdot,\cdot,a),\ f_\varepsilon(\cdot,\cdot,a)\in C^\infty(Q_T),\\
&b_\varepsilon(\cdot,\cdot,a),\ f_\varepsilon(\cdot,\cdot,a)
 \text{ are Lipschitz in }(t,x)
 \text{ for each fixed }\varepsilon>0,\\
& b_\varepsilon(\cdot,\cdot,a)\to b(\cdot,\cdot,a),\quad
  f_\varepsilon(\cdot,\cdot,a)\to f(\cdot,\cdot,a)
 \quad\text{in }L^p(Q_T),\quad \varepsilon\downarrow0.
\end{aligned}
\end{equation}
 Moreover, $\|\Phi_\varepsilon\|_{L^p}
 \le \|\Phi\|_{L^p}$.

\smallskip 
\paragraph{Regularized control problems}
For \(\varepsilon>0\) and \((s,x)\in Q_T\), let
\(\mathfrak A^\varepsilon_{s,T}(x)\) be the class of admissible weak control
systems defined as above, with \(b\) replaced by \(b_\varepsilon\). For
\(\mathbb A^\varepsilon\in\mathfrak A^\varepsilon_{s,T}(x)\), define
\begin{align}
 & J_{\varepsilon}(\mathbb A^\varepsilon;s,x) :=
     \mathbb E^{\mathbb A^\varepsilon}\int_{s}^{T}
     f_{\varepsilon}(t,X_t,\alpha_t)\md t,
     \label{eq:cost-eps}\\
 & V_{\varepsilon}(s,x) :=
     \inf_{\mathbb A^\varepsilon\in\mathfrak A^\varepsilon_{s,T}(x)}
     J_{\varepsilon}(\mathbb A^\varepsilon;s,x). \label{eq:value-eps}
\end{align}
Since $b_{\varepsilon}$ and $f_{\varepsilon}$ are bounded and Lipschitz in
$(t,x)$ for each fixed $\varepsilon>0$, the above regularized control problem is
well defined.  
Consider the following regularized HJB equation:
\begin{equation}\label{eq:HJB-eps}
\left\{\!\!
\begin{aligned}
&(\partial_s u_\varepsilon+\Delta u_\varepsilon)(s,x)
+\inf_{a\in A}
\{b_\varepsilon(s,x,a)\cdot\nabla u_\varepsilon(s,x)
+f_\varepsilon(s,x,a)\}=0,
\ (s,x)\in Q_T,\\
&u_\varepsilon(T,x)=0,\qquad x\in\mathbb R^d.
\end{aligned}
\right.
\end{equation}
Since the regularized coefficients satisfy the assumptions of
Theorem~\ref{thm:verification}, the same verification argument yields that
$V_\varepsilon$ coincides with the unique bounded Sobolev solution
$u_\varepsilon$ of \eqref{eq:HJB-eps}.

\subsection{Main results}\label{subsec:moll-main}

It is natural to expect that, as $\varepsilon\downarrow0$, the regularized
value functions $V_\varepsilon$ converge to the original value function $V$,
just as solutions of SDEs with mollified coefficients converge to those of the
original equation. Our analysis shows, however, that such convergence is \emph{not}
unconditional.
Under Assumption~\ref{ass:main}, the following one-sided estimate holds.
\begin{prop}[Liminf inequality]\label{thm:liminf}
For any compact $\mathcal X\subset\mathbb R^d$,
$$
 V(s,x)\;\le\;\liminf_{\varepsilon\to0} V_\varepsilon(s,x)
 \quad \text{for all } (s,x)\in[0,T]\times\mathcal X.
$$
\end{prop}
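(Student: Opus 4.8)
The plan is to argue entirely on the PDE side, using the identifications $V=u$ from Theorem~\ref{Thm:exi-uni} and $V_{\varepsilon}=u_{\varepsilon}$, where $u_{\varepsilon}\in W^{1,2}_p(Q_T)$ solves the mollified HJB equation~\eqref{eq:HJB-eps}. Let me first explain why the tempting probabilistic route is to be avoided here. If one took a near-optimal control for the mollified problem and fed it into the original dynamics, the resulting cost discrepancy, estimated by Krylov's inequality along an \emph{arbitrary} adapted control, would be governed by $\sup_{a\in A}|f-f_{\varepsilon}|(\cdot,\cdot,a)$; this supremum need not converge to $0$ in $L^p$ when $A$ is uncountable. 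The whole difficulty is therefore to design an argument that keeps the infimum over $a$ intact rather than dominating it by a supremum.

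The decisive observation is a Jensen-type inequality at the level of Hamiltonians. Freezing $P\in\R^{d}$ and writing $g_a(\cdot)=b(\cdot,a)\cdot P+f(\cdot,a)$, the linearity of convolution gives $H_{\varepsilon}(x,P):=\inf_{a\in A}\{b_{\varepsilon}(x,a)\cdot P+f_{\varepsilon}(x,a)\}=\inf_{a\in A}(\zeta_{\varepsilon}*g_a)(x)$. Since $\zeta_{\varepsilon}\ge 0$ has unit mass and the Hamiltonian $H(x,P):=\essinf_{a\in A}\{b(x,a)\cdot P+f(x,a)\}$ of~\eqref{eq:hjb} satisfies $H(\cdot,P)\le g_a$ a.e.\ for every $a$, convolving this lower bound and then taking the infimum over $a$ yields
\[
H_{\varepsilon}(x,P)\ \ge\ \bigl(\zeta_{\varepsilon}*H(\cdot,P)\bigr)(x),\qquad (x,P)\in\R^{d}\times\R^{d}.
\]
This one-sided bound is exactly what survives the passage to an uncountable control set: convolution commutes with the infimum in this direction with no supremum ever appearing.

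Next I would convert $u_{\varepsilon}$ into an approximate supersolution of the original equation. Evaluating the bound at $P=\nabla u_{\varepsilon}(x)$ and invoking~\eqref{eq:HJB-eps} gives $\partial_s u_{\varepsilon}+\Delta u_{\varepsilon}+H(x,\nabla u_{\varepsilon})\le r_{\varepsilon}$, where the consistency error is $r_{\varepsilon}(x):=H(x,\nabla u_{\varepsilon}(x))-\bigl(\zeta_{\varepsilon}*H(\cdot,\nabla u_{\varepsilon}(x))\bigr)(x)$. Comparing with the exact equation $\partial_s u+\Delta u+H(x,\nabla u)=0$ for $u=V$, I would linearise the Hamiltonian difference through a weighted Filippov selector $\hat a$ that is near-optimal for $\nabla u_{\varepsilon}$ with accuracy $\eta(1+|x|^{2})^{-\delta}\in L^{p}$, in the manner of~\eqref{alpha-k}. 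Setting $w:=u_{\varepsilon}-u$ this produces the linear inequality $\partial_s w+\Delta w+b(\cdot,\hat a)\cdot\nabla w\le r_{\varepsilon}+\eta(1+|x|^{2})^{-\delta}$ with $w(T,\cdot)=0$ and drift bounded by $\Phi\in L^{p}$. The weak maximum principle (\cite[Thm.~6.15]{Lieberman96}, used throughout Section~\ref{sec:HJB}) then forces $u_{\varepsilon}-u\ge-C\bigl(\|r_{\varepsilon}\|_{L^{p}(Q_T)}+\eta\bigr)$, and letting $\eta\downarrow 0$ leaves $u_{\varepsilon}\ge u-C\|r_{\varepsilon}\|_{L^{p}(Q_T)}$.

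Everything thus reduces to showing $\|r_{\varepsilon}\|_{L^{p}(Q_T)}\to 0$, which I expect to be the main obstacle, since the gradient $\nabla u_{\varepsilon}(x)$ at which $H$ is mollified varies with both $x$ and $\varepsilon$ while $H(\cdot,P)$ is only $L^{p}$ in space. The remedy is the $\varepsilon$-uniform bound $\sup_{\varepsilon}\|\nabla u_{\varepsilon}\|_{L^{\infty}(Q_T)}\le M<\infty$, obtained by freezing a near-optimal feedback, applying $L^{p}$-parabolic regularity to get $\|u_{\varepsilon}\|_{W^{1,2}_p(Q_T)}\le C\|\Phi\|_{L^{p}(Q_T)}$, and using the embedding $W^{1,2}_p\hookrightarrow C^{1+\gamma}$ valid for $p>d+2$. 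This confines $P$ to the ball $\{|P|\le M\}$; exploiting the Lipschitz bound $|H(\cdot,P)-H(\cdot,P')|\le\Phi\,|P-P'|$ and a finite $\eta$-net of that ball, I would dominate $|r_{\varepsilon}|$ by finitely many genuine mollification errors $\|H(\cdot,P_j)-\zeta_{\varepsilon}*H(\cdot,P_j)\|_{L^{p}}\to 0$ plus an $O(\eta)\,\|\Phi\|_{L^{p}}$ term, and conclude by sending $\varepsilon\to 0$ and then $\eta\to 0$. As the resulting estimate $u_{\varepsilon}\ge u-o(1)$ is uniform on $Q_T$, passing to the liminf yields $V\le\liminf_{\varepsilon\to0}V_{\varepsilon}$ on $[0,T]\times\mathcal X$. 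It is worth emphasising that countability of $A$ plays no role in this direction, precisely because the Jensen step used the infimum and not a supremum over the controls.
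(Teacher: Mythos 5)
Your argument is correct, and it reaches the conclusion by a genuinely different route from the paper in everything that follows the Jensen step. The common core is the inequality $\inf_{a}\zeta_{\varepsilon}*g_{a}\ \ge\ \zeta_{\varepsilon}*\operatorname{ess\,inf}_{a}g_{a}$, which appears in the paper as term (I) of Lemma~\ref{weak-super}; both proofs rest on the observation that mollification respects the infimum in exactly the one direction needed, so that countability of $A$ is irrelevant here. After that the paper extracts, via the uniform $W^{1,2}_{p}$ bound and Arzel\`a--Ascoli, a subsequence $u_{\varepsilon_k}\to\tilde u$ with locally uniformly convergent gradients, shows that the fixed limit $\tilde u$ is a \emph{weak (distributional) supersolution} of \eqref{eq:hjb} --- controlling the commutator between $\zeta_{\varepsilon_k}*b\cdot\nabla\tilde u$ and $\zeta_{\varepsilon_k}*(b\cdot\nabla\tilde u)$ through the H\"older continuity of $\nabla\tilde u$ --- and concludes by the comparison principle for weak sub/supersolutions. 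You instead stay entirely in the strong $W^{1,2}_{p}$ framework: you treat $u_{\varepsilon}$ itself as an approximate supersolution with an explicit consistency error $r_{\varepsilon}$, linearize the Hamiltonian difference with a Filippov selector, and invoke the weak maximum principle to get the quantitative bound $u_{\varepsilon}\ge u-C\|r_{\varepsilon}\|_{L^{p}(Q_T)}$; your commutator control replaces the H\"older argument by the uniform gradient bound plus a finite net of frozen slopes $P_j$ and standard $L^{p}$ mollification convergence. What your version buys is the avoidance of subsequence extraction and of the weak-solution comparison machinery, together with a bound that is uniform on all of $Q_T$ (not just on compacts) and in principle quantitative; what the paper's version buys is that it never needs the selector/linearization step in this direction and isolates the supersolution property of the limit as a reusable lemma. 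The only points you should make explicit in a written version are (i) that the near-optimal selector $\hat a$ is taken measurably in $(t,x)$ exactly as in \eqref{alpha-k}, with accuracy $\eta(1+|x|^{2})^{-\delta}\in L^{p}$ for $\delta>d/(2p)$, and (ii) that $H(\cdot,\cdot,P_j)\in L^{p}(Q_T)$ (being dominated by $\Phi(1+|P_j|)$) so that the finitely many mollification errors indeed vanish; neither is a gap, both being standard within the paper's own toolkit.
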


The reverse inequality may \emph{fail} in general---see Example~\ref{ex:failure-Lp} for a counterexample. 
To recover full convergence $V_\varepsilon\to V$, we identify a natural structural restriction on the control space. 
Specifically, if the set of admissible controls is countable, the approximation procedure becomes stable enough to ensure the missing inequality.

\begin{prop}\label{Prop:V>V_n}
If the control set $A$ is countable,
then
$$
 V(s,x)\;\ge\;\limsup_{\varepsilon\to0} V_\varepsilon(s,x)
 \quad \text{for all } (s,x)\in Q_T.
$$
\end{prop}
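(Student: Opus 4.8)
The plan is to establish the $\limsup$ bound through an explicit near-optimal control together with a stability analysis of the cost functional, invoking the countability of $A$ at exactly one point. Fix $(s,x)\in Q_T$ and $\eta>0$, and choose $\alpha_\cdot\in\mathcal A_{s,T}$ with $J(\alpha_\cdot;s,x)\le V(s,x)+\eta$. Since $V_\varepsilon(s,x)\le J_\varepsilon(\alpha_\cdot;s,x)$ by \eqref{eq:value-eps}, the entire proposition reduces to the single claim
\[ \lim_{\varepsilon\to0}J_\varepsilon(\alpha_\cdot;s,x)=J(\alpha_\cdot;s,x), \]
where the two SDEs \eqref{eq:state} and \eqref{eq:state-eps} are driven by the \emph{same} control $\alpha_\cdot$ and the \emph{same} Brownian motion $W$. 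Granting this, $\limsup_{\varepsilon\to0}V_\varepsilon(s,x)\le J(\alpha_\cdot;s,x)\le V(s,x)+\eta$, and letting $\eta\downarrow0$ gives the assertion.

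Write $A=\{a_i\}_{i\ge1}$ and introduce the effective (control-frozen) drifts $b^\ast_\varepsilon(t,x):=b_\varepsilon(t,x,\alpha_t)$ and $b^\ast(t,x):=b(t,x,\alpha_t)$, with analogous notation for $f$. The first step is to show these converge in $L^p(\Omega\times[s,T]\times\R^d)$. By Tonelli and the discrete law of $\alpha_t$,
\[ \E\,\|b^\ast_\varepsilon-b^\ast\|_{L^p}^p=\sum_{i\ge1}\int_s^T\Pb(\alpha_t=a_i)\,\psi^\varepsilon_i(t)\,\md t, \]
where $\psi^\varepsilon_i(t):=\int_{\R^d}|b_\varepsilon(t,x,a_i)-b(t,x,a_i)|^p\,\md x$. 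For each $i$ the $i$-th summand is bounded by $\|b_\varepsilon(\cdot,\cdot,a_i)-b(\cdot,\cdot,a_i)\|_{L^p(Q_T)}^p\to0$ (by \eqref{eq:b-f-eps}) and also by $c_i:=2^p\int_s^T\Pb(\alpha_t=a_i)\,\|\Phi(t,\cdot)\|_{L^p(\R^d)}^p\,\md t$; since $\sum_i\Pb(\alpha_t=a_i)=1$ one has $\sum_i c_i\le 2^p\|\Phi\|_{L^p(Q_T)}^p<\infty$. Dominated convergence over the index $i$ then forces the whole sum to $0$, i.e.\ $\E\|b^\ast_\varepsilon-b^\ast\|_{L^p}^p\to0$, and likewise for $f$. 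This is the \emph{only} place countability enters, and it is essential: for uncountable $A$ the time-disintegration of the law of $\alpha$ is a general probability kernel that cannot be decoupled from the per-value $L^p$-convergence, which is precisely the mechanism behind the failure in Example~\ref{ex:strict}.

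With the effective drift converging in $L^p(\Omega\times[s,T]\times\R^d)$ and uniformly dominated by $\Phi$, I would invoke the stability theory for SDEs with singular drift (via Krylov's estimate and the Zvonkin/Girsanov machinery of \cite{KrylovRockner2005,Zhang2011,Xia2020}) to obtain $X^\varepsilon\to X^{s,x;\alpha}$, say $\E\sup_{t\in[s,T]}|X^\varepsilon_t-X_t|\to0$. To pass to the costs I split
\begin{align*}
J_\varepsilon(\alpha_\cdot;s,x)-J(\alpha_\cdot;s,x)
&=\E\int_s^T[f_\varepsilon-f](t,X^\varepsilon_t,\alpha_t)\,\md t\\
&\quad+\E\int_s^T\big[f(t,X^\varepsilon_t,\alpha_t)-f(t,X_t,\alpha_t)\big]\,\md t,
\end{align*}
and for each term I truncate the control set at level $N$. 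The finite part $\sum_{i\le N}$ reduces to finitely many \emph{deterministic} Krylov estimates (for the first term, $\sum_{i\le N}C\|f_\varepsilon(\cdot,\cdot,a_i)-f(\cdot,\cdot,a_i)\|_{L^p(Q_T)}\to0$; for the second, occupation-measure stability of the fixed $L^p$ data $f(\cdot,\cdot,a_i)$ under $X^\varepsilon\to X$), while the tail is bounded by $\E\int_s^T 2\Phi(t,X^\varepsilon_t)\,\mathbf 1_{\{\alpha_t\notin\{a_1,\dots,a_N\}\}}\,\md t$ (and similarly with $X_t$). Splitting $\Phi=\Phi\mathbf 1_{\{\Phi\le M\}}+\Phi\mathbf 1_{\{\Phi>M\}}$, the bounded piece is at most $M\,\Pb\!\otimes\!\md t(\{\alpha_t\notin\{a_1,\dots,a_N\}\})$ and the unbounded piece at most $C\|\Phi\mathbf 1_{\{\Phi>M\}}\|_{L^p(Q_T)}$, both uniform in $\varepsilon$; choosing $M$ large and then $N$ large makes the tail uniformly small. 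Sending $\varepsilon\to0$, then $N\to\infty$, then $M\to\infty$ annihilates both terms.

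The main obstacle is the stability input $X^\varepsilon\to X$ for drifts that are simultaneously singular and \emph{random} (adapted through $\alpha$): the standard stability statements are phrased for deterministic $L^p$ drifts, so some care is needed to legitimize them in the controlled setting—most cleanly a Girsanov reduction to the driftless process weighted by Radon--Nikodym densities whose convergence follows from $\E\|b^\ast_\varepsilon-b^\ast\|_{L^p}^p\to0$ together with the uniform exponential-integrability bounds furnished by $|b_\varepsilon|\le\Phi$. The other potential difficulty, namely converting per-value $L^p$-convergence into convergence of the frozen-control coefficients, is exactly what countability dissolves through the dominated-convergence-over-index argument of the second step; the finite/tail truncation then keeps every Krylov estimate deterministic, so no random-integrand version of Krylov's inequality is required. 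Combined with Proposition~\ref{thm:liminf}, this yields the full convergence $V_\varepsilon\to V$ under the countability hypothesis.
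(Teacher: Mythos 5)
Your overall strategy is the same as the paper's: pick a near-optimal control $\alpha_\cdot$, reduce the proposition to the single convergence $J_\varepsilon(\alpha_\cdot;s,x)\to J(\alpha_\cdot;s,x)$, and use countability of $A$ to decompose everything over the actions $a_i$, so that the per-action convergences $b_\varepsilon(\cdot,\cdot,a_i)\to b(\cdot,\cdot,a_i)$ and $f_\varepsilon(\cdot,\cdot,a_i)\to f(\cdot,\cdot,a_i)$ in $L^p(Q_T)$ from \eqref{eq:b-f-eps} can be fed into Krylov's estimate one action at a time. The difference is organizational: the paper first replaces $\alpha_\cdot$ by a control taking only the values $a_1,\dots,a_N$ (Lemma~\ref{Le:eps}) and then proves $J_\varepsilon\to J$ only for such finitely-valued controls (Lemma~\ref{Le:n-infty}), whereas you keep the original control and truncate inside the estimate, with a tail bound uniform in $\varepsilon$ obtained by splitting $\Phi$ into bounded and unbounded parts. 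Your second step (dominated convergence over the index $i$ to obtain $\E\|b^\ast_\varepsilon-b^\ast\|^p_{L^p}\to0$) is correct, your tail estimate is if anything more explicit than the paper's Lemma~\ref{Le:eps}, and your diagnosis of where countability is indispensable matches the paper's.

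The genuine gap is the stability input. You ``invoke the stability theory for SDEs with singular drift'' to get $\E\sup_t|X^\varepsilon_t-X_t|\to0$, but the drift $b(t,x,\alpha_t)$ is simultaneously merely measurable in $x$ and random (adapted through $\alpha$), and none of the cited results state such a stability theorem for this class: a Girsanov reduction yields at best convergence of laws, and a Zvonkin-type transform for random drifts is itself a nontrivial project. The paper avoids needing any such black box: for each fixed $\varepsilon$ it compares $X^{\varepsilon,\theta}$ and $X^{\theta}$ directly by Gr\"onwall, exploiting the Lipschitz continuity in $x$ of the mollified $b_\varepsilon$ together with Krylov's estimate applied to each of the finitely many frozen actions. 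Relatedly, your ``occupation-measure stability'' of $\E\int f(t,X^\varepsilon_t,a_i)\,\md t$ is asserted rather than argued: since $f(\cdot,\cdot,a_i)$ is only measurable, pathwise closeness of $X^\varepsilon$ and $X$ gives nothing by itself; one must approximate $f(\cdot,\cdot,a_i)$ in $L^p(Q_T)$ by continuous functions and absorb the error with a Krylov estimate that is uniform in $\varepsilon$ (available because $|b_\varepsilon|\le\Phi$), combined with convergence in probability of $X^\varepsilon_t$. These two steps carry essentially all of the analytic weight of the proposition, so as written the proposal is a correct skeleton whose load-bearing bones are placeholders.
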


Propositions~\ref{thm:liminf} and~\ref{Prop:V>V_n} together yield our main convergence theorem.

\begin{theorem}[Convergence under countable actions]\label{thm:countable}
Let Assumption~\ref{ass:main} hold and suppose the control set $A$ is countable. 
Then $V_\varepsilon\to V$ locally uniformly on $Q_T$.
\end{theorem}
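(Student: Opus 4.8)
The plan is to combine the two one-sided estimates already established with a compactness argument. Propositions~\ref{thm:liminf} and~\ref{Prop:V>V_n} together give, for every $(s,x)\in Q_T$,
$$
V(s,x)\le \liminf_{\varepsilon\to0}V_\varepsilon(s,x)\le \limsup_{\varepsilon\to0}V_\varepsilon(s,x)\le V(s,x),
$$
so that $V_\varepsilon(s,x)\to V(s,x)$ pointwise on $Q_T$. It remains only to upgrade this pointwise convergence to local uniform convergence, and the natural tool is equicontinuity supplied by a uniform Hölder bound on the family $\{V_\varepsilon\}$.

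The key step is a regularity estimate that is uniform in $\varepsilon$. Since $V_\varepsilon=u_\varepsilon$ solves the HJB equation~\eqref{eq:HJB-eps} in the classical sense, and the mollified coefficients $b_\varepsilon,f_\varepsilon$ are all dominated by the \emph{same} $\Phi\in L^p(Q_T)$ by~\eqref{eq:b-f-eps}, the $L^p$-parabolic regularity applied to~\eqref{eq:HJB-eps} yields
$$
\sup_{\varepsilon>0}\|u_\varepsilon\|_{W_p^{1,2}(Q_T)}\le C\|\Phi\|_{L^p(Q_T)},
$$
with $C=C(d,p,T)$ independent of $\varepsilon$; this is the same bound used in the proof of Proposition~\ref{Prop: well}, and the crucial point is that the right-hand side does not depend on $\varepsilon$ precisely because $\Phi$ dominates all the mollified data uniformly. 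Since $p>d+2$, the Sobolev embedding $W_p^{1,2}(Q_T)\hookrightarrow C_{t,x}^{\frac{1+\alpha}{2},\,1+\alpha}(Q_T)$ then gives a uniform Hölder bound $\sup_{\varepsilon>0}\|u_\varepsilon\|_{C^{\delta}(Q_T)}\le C\|\Phi\|_{L^p(Q_T)}$ for some $\delta\in(0,1)$, exactly as in~\eqref{ineq:u-k}.

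Given this bound, the family $\{V_\varepsilon\}$ is uniformly bounded and equicontinuous on every compact set $[0,T]\times\mathcal X$. I would then conclude by a standard Arzelà--Ascoli argument: from any sequence $\varepsilon_n\downarrow0$ one extracts a subsequence along which $V_{\varepsilon_n}$ converges uniformly on $[0,T]\times\mathcal X$, and by the pointwise convergence established above the uniform limit must coincide with $V$. Since every subsequential limit equals $V$, the whole family $V_\varepsilon$ converges to $V$ uniformly on $[0,T]\times\mathcal X$, which is precisely local uniform convergence on $Q_T$ (the only non-compactness being in the spatial variable, as $[0,T]$ is already compact).

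The main obstacle is not the compactness machinery, which is routine, but rather ensuring that the constant in the parabolic estimate is genuinely $\varepsilon$-free; this hinges entirely on the uniform domination in~\eqref{eq:b-f-eps}. The substantive analytic work has in fact already been carried out in Propositions~\ref{thm:liminf} and~\ref{Prop:V>V_n}---in particular the countability of $A$ enters only through the $\limsup$ inequality---so at this stage the argument reduces to assembling the pointwise limit and promoting it via equicontinuity.
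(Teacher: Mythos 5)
Your proposal is correct and follows essentially the same route as the paper, which obtains the theorem by combining Propositions~\ref{thm:liminf} and~\ref{Prop:V>V_n} for the pointwise identification and relies on the uniform bound $\sup_{\varepsilon>0}\|u_{\varepsilon}\|_{C_{t,x}^{(1+\alpha)/2,1+\alpha}}\le C\|\Phi\|_{L^p(Q_T)}$ (already recorded in Section~\ref{subsec:proof-liminf}) for the equicontinuity needed to pass to local uniform convergence. Your Arzel\`a--Ascoli subsequence argument is exactly the standard way to fill in the step the paper leaves implicit.
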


\smallskip 
The proofs of Propositions~\ref{thm:liminf} and~\ref{Prop:V>V_n} will be given in the subsequent subsections, following the counterexample below.

\medskip

\begin{example}[Failure of convergence under mollification]
\label{ex:failure-Lp}
Let $d=1$ and let $A=\mathbb R$. Fix $s_0\in(0,T)$ and choose
$\delta>0$ such that $s_0+\delta<T$. Let
$\chi(t)={\bf 1}_{[s_0,s_0+\delta]}$.
For $\lambda\ge0$, set
$$
q_\lambda(x):=e^{-\lambda x^2},
\qquad
F(x):=e^{-x^2}\big(2+\tanh x\big).
$$
We first take $\lambda>0$. Then $q_\lambda,F\in L^p(\mathbb R)$ for every
$p\ge1$, and $F\ge0$. In the control problem \eqref{eq:oc_problem}, define
$$
b_\lambda(t,x,a):=\chi(t)q_\lambda(x)\mathbf 1_{\{x\ne a\}},
\qquad
f(t,x,a):=\chi(t)F(x).
$$
These coefficients satisfy Assumption~\ref{ass:main} with
$
\Phi_\lambda(t,x):=\chi(t)\big(q_\lambda(x)+F(x)\big).
$

For notational simplicity, we suppress the dependence on $\lambda$ in the value
functions. Consider the initial point $(s_0,0)$. In the original control
problem, choose the feedback control $\alpha_t=X_t$. Then the drift vanishes
and the corresponding state is
$
X_t^0=\sqrt2(W_t-W_{s_0})
$, $t\in[s_0,T]$. Hence
\begin{equation}\label{eq:V-upper-Lp-example}
V(s_0,0)
\le
\mathbb E\int_{s_0}^{s_0+\delta}F(X_t^0)\,\md t .
\end{equation}

On the other hand, since the set $\{x=a\}$ is Lebesgue null, mollification in
the time-space variables gives, for every $a\in A$,
$$
b_{\lambda,\varepsilon}(t,x,a)
=
\zeta_\varepsilon*(\chi q_\lambda)(t,x)
=: \bar b_{\lambda,\varepsilon}(t,x),
\qquad
f_\varepsilon(t,x,a)
=
\zeta_\varepsilon*(\chi F)(t,x)
=: \bar f_\varepsilon(t,x).
$$
Thus the regularized control problem has no effective control in the
coefficients and is given by
$$
\md X_t^\varepsilon
=
\bar b_{\lambda,\varepsilon}(t,X_t^\varepsilon)\,\md t
+\sqrt2\,\md W_t,
\qquad
X_{s_0}^\varepsilon=0,
$$
with value
$
V_\varepsilon(s_0,0)
=
\mathbb E\int_{s_0}^T
\bar f_\varepsilon(t,X_t^\varepsilon)\,\md t .
$

Since
$f_\varepsilon\to\chi F$ and
$b_{\lambda,\varepsilon}\to\chi q_\lambda$ as $\varepsilon\to0$, the standard
stability of SDEs gives
\begin{equation}\label{eq:Ve-upper-Lp-example}
V_\varepsilon(s_0,0)
\longrightarrow
\mathbb E\int_{s_0}^{s_0+\delta} F(Y_t)\,\md t,
\end{equation}
where
$
\md Y_r=q_\lambda(Y_r)\,\md r+\sqrt2\,\md W_r,
$ $
Y_{s_0}=0,
$ $r\in[s_0,s_0+\delta]$.

It remains to compare \eqref{eq:Ve-upper-Lp-example} and
\eqref{eq:V-upper-Lp-example}. Since
$
F'(0)=1
$
and
$
q_\lambda(0)=1,
$
the generator expansion at $x=0$ yields, as $r\downarrow s_0$,
$$
\mathbb E[F(Y_r)]
-
\mathbb E[F(X_r^0)]
=
(r-s_0)q_\lambda(0)F'(0)+o(r-s_0)
=
(r-s_0)+o(r-s_0).
$$
Thus, by taking $\delta>0$ sufficiently small, we obtain
$$
\mathbb E\int_{s_0}^{s_0+\delta} F(Y_r)\,\md r
>
\mathbb E\int_{s_0}^{s_0+\delta} F(X_r^0)\,\md r
\ge
V(s_0,0).
$$
Therefore,
$
\lim_{\varepsilon\to0}V_\varepsilon(s_0,0)
>
V(s_0,0).
$
This shows that the convergence $V_\varepsilon\to V$ may fail even under the
global $L^p$-integrability condition imposed in Assumption~\ref{ass:main}.

\smallskip 

The limiting case $\lambda=0$ gives $q_0\equiv1$. It is outside the global
$L^p$ framework on $Q_T$, but the coefficients
$
b_0(t,x,a)=\chi(t)\mathbf 1_{\{x\ne a\}}
$
and $f(t,x,a)=\chi(t)F(x)$ are bounded Borel measurable. The same calculation
applies, because $q_0(0)=1$. Hence the failure of convergence is not
caused by unbounded coefficients; it may also occur in a bounded-coefficient
setting outside Assumption~\ref{ass:main}.
\end{example}

\subsection{Proof of Proposition~\ref{thm:liminf}}\label{subsec:proof-liminf}
 
We prove the estimate
$V(\cdot,\cdot) \leq \liminf_{\varepsilon\to0}V_\varepsilon(\cdot,\cdot)$.
By \eqref{eq:b-f-eps} and
$\|\Phi_\varepsilon\|_{L^p}\le \|\Phi\|_{L^p}$, the Sobolev estimates for the
regularized HJB equation \eqref{eq:HJB-eps} imply that
$\sup_{\varepsilon>0}\|u_{\varepsilon}\|_{W^{1,2}_{p}}<\infty$.
Since $p>d+2$, the Sobolev embedding theorem gives, for some
$\alpha\in(0,1)$,
$$
\sup_{\varepsilon>0}\|u_{\varepsilon}\|_{C_{t,x}^{(1+\alpha)/2,1+\alpha}}\le C\sup_{\varepsilon>0}\|u_{\varepsilon}\|_{W^{1,2}_{p}}<\infty.
$$
By the Arzel\`{a}--Ascoli Theorem, there exists a subsequence $\varepsilon_k$ such that ${u_{\varepsilon_k}} $ 
uniformly converges to a function $\tilde{u}\in C_{t,x}^{(1+\alpha)/2,1+\alpha}$ \emph{locally};
specifically, for any compact $\mathcal X\subset \mathbb R^d$, 
\begin{equation} \label{u-tilde u}
\lim_{k\to \infty}\sup_{(s,x)\in [0,T]\times\mathcal X}\big(|u_{\varepsilon_k}(s,x)-\tilde{u}(s,x)|+|\nabla u_{\varepsilon_k}(s,x)-\nabla\tilde{u}(s,x)|\big)=0.
\end{equation}

First, we give a characterization of the limit function $\tilde u$.

\smallskip

\begin{lem}\label{weak-super} Under Assumption \ref{ass:main},
for any  nonnegative  test function $\phi(\cdot,\cdot)\in C_c^{1,\infty}({Q_T};\mathbb R)$, one has
$$\begin{array}{ll}
&\displaystyle
\iint_{Q_T}-\big(\tilde{u}(s,x)\partial_{s}\phi(s,x)+\nabla\tilde{u}(s,x)\cdot\nabla\phi(s,x)\big)\md x\md s\\
&\displaystyle\qquad +\iint_{Q_T}\inf_{a\in A}\big\{ b(s,x,a)\cdot\nabla\tilde{u}(s,x)+f(s,x,a)\big\}\phi(s,x)\md x\md s \le0.
\end{array}$$

\end{lem}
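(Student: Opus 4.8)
The plan is to start from the classical HJB equation~\eqref{eq:HJB-eps} satisfied by each $u_{\varepsilon_k}$, test it against a nonnegative $\phi\in C_c^{1,\infty}(Q_T)$, and integrate by parts in the parabolic part so that only one spatial derivative falls on $u_{\varepsilon_k}$ (no boundary terms arise since $\phi$ has compact support). Writing $H_\varepsilon(s,x,P):=\inf_{a\in A}\{b_\varepsilon(s,x,a)\cdot P+f_\varepsilon(s,x,a)\}$, this produces the identity
\[
\iint_{Q_T}\bigl(-u_{\varepsilon_k}\partial_s\phi-\nabla u_{\varepsilon_k}\cdot\nabla\phi\bigr)\md x\md s
=-\iint_{Q_T} H_{\varepsilon_k}\bigl(s,x,\nabla u_{\varepsilon_k}\bigr)\,\phi\,\md x\md s .
\]
The left-hand side passes to the limit at once: by the local uniform convergence~\eqref{u-tilde u} of $u_{\varepsilon_k}$ and $\nabla u_{\varepsilon_k}$ together with the compact support of $\phi$, it converges to $\iint_{Q_T}(-\tilde u\,\partial_s\phi-\nabla\tilde u\cdot\nabla\phi)\md x\md s$. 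Hence $\lim_k\iint_{Q_T} H_{\varepsilon_k}(s,x,\nabla u_{\varepsilon_k})\phi\,\md x\md s$ exists, and the lemma reduces to the single lower bound
\[
\begin{aligned}
&\liminf_{k\to\infty}\iint_{Q_T} H_{\varepsilon_k}\bigl(s,x,\nabla u_{\varepsilon_k}\bigr)\,\phi\,\md x\md s\\
&\qquad\ge\iint_{Q_T}\inf_{a\in A}\bigl\{b(s,x,a)\cdot\nabla\tilde u+f(s,x,a)\bigr\}\,\phi\,\md x\md s .
\end{aligned}
\]

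To establish this I would first freeze the gradient, splitting $H_{\varepsilon_k}(s,x,\nabla u_{\varepsilon_k})=\bigl[H_{\varepsilon_k}(s,x,\nabla u_{\varepsilon_k})-H_{\varepsilon_k}(s,x,\nabla\tilde u)\bigr]+H_{\varepsilon_k}(s,x,\nabla\tilde u)$. Since $|b_\varepsilon|\le\Phi$ by~\eqref{eq:b-f-eps}, each $H_\varepsilon$ is Lipschitz in $P$ with constant $\Phi(s,x)$, so the bracket is bounded by $\Phi(s,x)\,|\nabla u_{\varepsilon_k}-\nabla\tilde u|$; integrating against $\phi$ and using uniform gradient convergence on the support of $\phi$ together with $\Phi\in L^1(\mathrm{supp}\,\phi)$ (as $\mathrm{supp}\,\phi$ is compact and $p>1$) shows this contribution vanishes as $k\to\infty$.

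The essential step is the frozen-gradient term $H_{\varepsilon_k}(s,x,\nabla\tilde u(s,x))$, where the noncommutativity of the infimum with the convolution must be confronted. Here I would invoke the Jensen-type inequality available in the favorable direction: writing $H(\tau,y,P):=\inf_{a\in A}\{b(\tau,y,a)\cdot P+f(\tau,y,a)\}$, for any fixed $P$ one has $b(\tau,y,a)\cdot P+f(\tau,y,a)\ge H(\tau,y,P)$ for every $a$, and convolving with $\zeta_\varepsilon\ge0$ and then taking the infimum yields $H_{\varepsilon}(s,x,P)\ge(\zeta_{\varepsilon}*H(\cdot,\cdot,P))(s,x)$. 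Applying this with $P=\nabla\tilde u(s,x)$ frozen at the base point, I would replace $\nabla\tilde u(s,x)$ by the running value $\nabla\tilde u(\tau,y)$ inside the convolution; the resulting error is dominated by $\Phi(\tau,y)$ times the H\"older modulus of $\nabla\tilde u$ over the $\varepsilon_k$-support of the kernel, which tends to $0$. What remains is $\zeta_{\varepsilon_k}*G$ with $G(\tau,y):=H(\tau,y,\nabla\tilde u(\tau,y))\in L^{1}_{\mathrm{loc}}$, and standard mollifier convergence in $L^{1}_{\mathrm{loc}}$ gives $\iint(\zeta_{\varepsilon_k}*G)\phi\to\iint G\phi=\iint\inf_{a\in A}\{b\cdot\nabla\tilde u+f\}\phi$, which is exactly the required bound.

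I expect the main obstacle to be precisely this frozen-gradient analysis, since it forces one to reconcile two limits of different strengths: the merely $L^p$ convergence of the coefficients, which can only be used through the convolution inequality, and the uniform convergence of the gradients, which is needed to freeze $P$. It is the one-sided nature of the Jensen inequality $H_\varepsilon(s,x,P)\ge(\zeta_\varepsilon*H(\cdot,\cdot,P))(s,x)$ that produces a supersolution rather than a solution, and this asymmetry is exactly what explains why only the $\liminf$ bound survives in general, with the reverse inequality requiring the countability hypothesis of Proposition~\ref{Prop:V>V_n}.
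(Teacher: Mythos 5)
Your proposal is correct and follows essentially the same route as the paper's proof: testing the mollified HJB equation against $\phi$, freezing the gradient at $\nabla\tilde u$ via the uniform convergence \eqref{u-tilde u} and the Lipschitz-in-$P$ bound $|b_\varepsilon|\le\Phi$, invoking the one-sided Jensen inequality $\inf_a(\zeta_\varepsilon* g_a)\ge\zeta_\varepsilon*(\inf_a g_a)$, and controlling the commutator between convolution and the frozen gradient by the H\"older modulus of $\nabla\tilde u$. The only differences are cosmetic (the order of the freezing and Jensen steps, and using $L^1_{\mathrm{loc}}$ mollifier convergence where the paper uses Fatou plus a.e.\ convergence), and your closing remark correctly identifies the one-sidedness of the Jensen step as the source of the supersolution property.
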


 Here, a function is called {\emph{a weak supersolution}} of the HJB
equation~\eqref{eq:hjb} if it satisfies the above integral inequality for
every nonnegative test function
$\phi\in C_c^{1,\infty}(Q_T;\mathbb R)$. 

\smallskip
\begin{proof}
 For each $k\ge 1$, since $V_{\varepsilon_k}=u_{\varepsilon_k}$ and
$u_{\varepsilon_k}$ is the bounded Sobolev solution of the regularized HJB
equation \eqref{eq:HJB-eps}, it satisfies the weak formulation.
Indeed, multiplying \eqref{eq:HJB-eps} by any nonnegative
$\phi\in C_c^{1,\infty}(Q_T;\mathbb R)$ and integrating by parts over $Q_T$, we obtain 
\begin{equation}\label{Weak-Sou}\begin{array}{ll}
&\displaystyle \iint_{Q_T}-\big(u_{\varepsilon_k}(s,x)\partial_{s}\phi(s,x)+\nabla u_{\varepsilon_k}(s,x)\cdot\nabla\phi(s,x)\big)\md x\md s \\
&\displaystyle\quad   +\iint_{Q_T}\inf_{a\in A}\big\{ b_{\varepsilon_k}( s,x,a)\cdot\nabla u_{\varepsilon_k}(s,x)+f_{\varepsilon_k}(s,x,a)\big\}\phi(s,x)\md x\md s =0.
\end{array}\end{equation}

For convenience, for $(s,x,P)\in Q_T\times \mathbb R^n$, we set
$$ \begin{aligned}
&G(s,x,P):=
\inf_{a\in A}
\left\{
b(s,x,a)\cdot P+f(s,x,a)
\right\},\\
&G_k (s,x,P):=
\inf_{a\in A}
\left\{
b_{\varepsilon _k}(s,x,a)\cdot P+f_{\varepsilon _k}(s,x,a)
\right\}.
\end{aligned}$$ 
Since   $\nabla\widetilde u$ is locally bounded,
we have $G(\cdot,\cdot,\nabla\tilde{u} (\cdot,\cdot)) \in L^p_{\rm loc}(Q_T)$. Hence
$$
\zeta_{\varepsilon_k}*
G(\cdot,\cdot,\nabla\widetilde u(\cdot,\cdot))
\to
G(\cdot,\cdot,\nabla\widetilde u(\cdot,\cdot))
\quad\text{in }L^p_{\rm loc}(Q_T).
$$ 
Therefore, for  $\phi\in C_c^{1,\infty}(Q_T;\mathbb R)$,
\begin{equation}\label{Gm-G}
\iint_{Q_T}
 [\zeta_{\varepsilon_k}*
G(\cdot,\cdot,\nabla\widetilde u(\cdot,\cdot))
](s,x) \phi(s,x)\md x\md s
\to
\iint_{Q_T}
G(s,x,\nabla\tilde{u} (s,x))\phi(s,x)\md x \md s .
\end{equation}
Moreover, using  \eqref{u-tilde u}, for   any  $\widetilde\eps>0$, there is some $K$ such that for any  $k>K$,  and for all $(s,x)\in {Q_T}  ,$   
$$\begin{array}{ll}
 & \displaystyle \big|G_k(s,x,\nabla{u}_{\varepsilon_k}(s,x))-G_k(s,x,\nabla\tilde{u} (s,x)) \big|\phi(s,x)\\
 & \displaystyle \le\sup_{a\in A}\big|b_{\varepsilon_k}(s,x,a)\cdot\nabla u_{\varepsilon_k}(s,x)-b_{\varepsilon_k}(s,x,a)\cdot\nabla\tilde{u}(s,x)\big|\phi(s,x) \\
 & \displaystyle \le \Phi_{\varepsilon_k }(s,x)\phi(s,x)\widetilde\varepsilon .
\end{array}$$ 
Then, using \eqref{Weak-Sou}, for all $k>K$,
$$\begin{array}{ll}
 &\displaystyle \iint_{Q_T}-\big(u_{\varepsilon_k}(s,x)\partial_{s}\phi(s,x)+\nabla u_{\varepsilon_k}(s,x)\cdot\nabla\phi(s,x)\big)\md x\md s\\
 &\displaystyle  \qquad +\iint_{Q_T}G_k(s,x,\nabla\tilde{u} (s,x))\phi(s,x)\md x\md s \\
 & \displaystyle \le\iint_{Q_T}-\big(u_{\varepsilon_k}(s,x)\partial_{s}\phi (s,x)+\nabla u_{\varepsilon_k}(s,x)\cdot\nabla\phi(s,x)\big)\md x\md s\\
 &\displaystyle \qquad  +\iint_{Q_T}\!\!\!\Big(G_k(s,x,\nabla{u}_{\varepsilon_k}(s,x))+\widetilde\varepsilon  \Phi_{\varepsilon_k}(s,x) \Big)\phi(s,x)\md x\md s \\
 &\displaystyle  =\widetilde\varepsilon \iint_{Q_T} \Phi_{\varepsilon_k}(s,x)\phi(s,x)\md x\md s .
\end{array}
$$

Taking $\liminf_{k\to\infty}$ on both sides,  we get
$$
\begin{array}{ll}
 &\displaystyle
 \widetilde\varepsilon \iint_{Q_T}\Phi(s,x)\phi(s,x)\md x\md s
 +\iint_{Q_T}\big(\tilde{u}(s,x)\partial_{s}\phi(s,x)
 +\nabla\tilde{u}(s,x)\cdot\nabla \phi(s,x)\big)\md x\md s
\\[2mm]
 & \displaystyle \geq
 \liminf_{k\to\infty}\iint_{Q_T}
 G_k(s,x,\nabla\tilde{u}(s,x))\phi(s,x)\md x\md s
\\[2mm]
 & \displaystyle =
 \liminf_{k\to\infty}\iint_{Q_T}\inf_{a\in A}
 \big\{
 [\zeta_{\varepsilon_k}*b(\cdot,\cdot,a)](s,x)
 \cdot\nabla\tilde{u}(s,x)
 \\
 &\hskip 4cm+[\zeta_{\varepsilon_k}*f(\cdot,\cdot,a)](s,x)
 \big\}\phi(s,x)\md x\md s
\\[2mm]
 &\displaystyle  \geq {\rm(I)}+{\rm(II)}.
\end{array}
$$ 
where
$$
\begin{array}{ll}
&\displaystyle   {\rm(I)} :=
\liminf_{k\to\infty}\iint_{Q_T}\inf_{a\in A}
\big\{
[\zeta_{\varepsilon_k}*
\big(b(\cdot,\cdot,a)\cdot\nabla\tilde{u}(\cdot,\cdot)
+f(\cdot,\cdot,a)\big)](s,x)
\big\}\phi(s,x)\md x\md s
\\ 
[3mm] &\displaystyle \quad \ \ \ge
\liminf_{k\to\infty}\iint_{Q_T}
[\zeta_{\varepsilon_k}*
G(\cdot,\cdot,\nabla\tilde{u}(\cdot,\cdot))](s,x)
\,\phi(s,x)\md x\md s
\\ 
[3mm] &\displaystyle\quad \ \  =
\iint_{Q_T}G(s,x,\nabla\tilde{u}(s,x))\phi(s,x)\md x\md s,
\qquad \mbox{using }\eqref{Gm-G}.
\end{array}
$$ 
and
$$
\begin{array}{ll}
&\displaystyle {\rm(II)}
  :=
\liminf_{k\to\infty}\iint_{Q_T}\inf_{a\in A}
\big\{
[\zeta_{\varepsilon_k}*b(\cdot,\cdot,a)](s,x)
\cdot\nabla\tilde{u}(s,x)
\\
&\hskip 5cm
-
[\zeta_{\varepsilon_k}*
\big(b(\cdot,\cdot,a)\cdot\nabla\tilde{u}(\cdot,\cdot)\big)](s,x)
\big\}\phi(s,x)\md x\md s
\\[2mm]
&\displaystyle\quad\ \  \ge
-\limsup_{k\to\infty}\iint_{Q_T}\sup_{a\in A}
\big|
[\zeta_{\varepsilon_k}*b(\cdot,\cdot,a)](s,x)
\cdot\nabla\tilde{u}(s,x)
\\
&\hskip 5cm
-
[\zeta_{\varepsilon_k}*
\big(b(\cdot,\cdot,a)\cdot\nabla\tilde{u}(\cdot,\cdot)\big)](s,x)
\big|\phi(s,x)\md x\md s .
\end{array}
$$

Since $\phi$ has compact support, we choose a compact set $\mathcal X\subset Q_T$
containing a small neighbourhood of $\operatorname{supp}\phi$. For $k$ large
enough, if $(s,x)\in\operatorname{supp}\phi$ and
$\zeta_{\varepsilon_k}(s-r,x-y)\neq0$, then $(r,y)\in \mathcal X$. Hence, by the
bound \(|b|\le\Phi\) and the H\"older continuity of \(\nabla\widetilde u\) on
$\mathcal X$, we obtain 
$$
\begin{aligned}
&\sup_{a\in A}\Big|
[\zeta_{\varepsilon_k}*b(\cdot,\cdot,a)](s,x)\cdot\nabla\tilde u(s,x)
-
[\zeta_{\varepsilon_k}*(b(\cdot,\cdot,a)\cdot\nabla\tilde u(\cdot,\cdot))](s,x)
\Big|  \\
&=
\sup_{a\in A}\left|
\iint_{Q_T}
\zeta_{\varepsilon_k}(s-r,x-y)b(r,y,a)\cdot
\big(\nabla\tilde u(s,x)-\nabla\tilde u(r,y)\big)\md y\md r
\right|  \\
&\le
 C_\mathcal X 
\iint_{Q_T}
\zeta_{\varepsilon_k}(s-r,x-y)\Phi(r,y)
\big(|s-r|^{\alpha/2}+|x-y|^\alpha\big)\md y\md r .
\end{aligned}$$
 where  
$$
 C_\mathcal X:=
[\nabla\widetilde u\, ]_{\frac\alpha2,\alpha;\mathcal X}
:=
\sup_{\substack{(s,x),(r,y)\in \mathcal X\\ (s,x)\ne(r,y)}}
\frac{
|\nabla\widetilde u(s,x)-\nabla\widetilde u(r,y)|
}{
|s-r|^{\alpha/2}+|x-y|^\alpha
}<\infty . 
$$ 
Then 
$$\begin{array}{ll}
& {\rm(II)}  \ge -C_\mathcal X \limsup_{k\to\infty} \iint_{Q_T}\phi(s,x)\Big(
\iint_{Q_T}
\zeta_{\varepsilon_k}(s-r,x-y)\Phi(r,y)  \\
&\hskip 6.5cm \cdot
\big(|s-r|^{\frac \alpha 2}+|x-y|^\alpha\big)
 \md y\md r\Big)\md x\md s \\
[2mm]&\quad \ \ \ge
-C_\mathcal X\|\phi\|_\infty
\limsup_{k\to\infty}
\iint_{\mathcal X}\Phi(r,y)\,\md y \md r
\iint_{{\mathbb R\times\mathbb R^n}}
\zeta_{\varepsilon_k}(\tau,z)
\big(|\tau|^{\alpha/2}+|z|^\alpha\big)
\,\md z\md \tau
\\
[2mm]&\quad \ \ \ge
-C\,C_\mathcal X\|\phi\|_\infty\|\Phi\|_{L^p(\mathcal X)}
\limsup_{k\to\infty}
\iint_{{\mathbb R\times\mathbb R^n}}
\zeta_{\varepsilon_k}(\tau,z)
\big(|\tau|^{\alpha/2}+|z|^\alpha\big)
\,\md z \md \tau  \\
[3mm]&\quad \ \ \ge
-C\,C_\mathcal X\|\phi\|_\infty\|\Phi\|_{L^p(\mathcal X)}
\limsup_{k\to\infty}
\big(\varepsilon_k^{\alpha/2}+\varepsilon_k^\alpha\big) =0
.\end{array}
$$
To sum up, one has
$$\begin{array}{ll}
 & \displaystyle    \widetilde\varepsilon \iint_{Q_T}\!\Phi(s,x)\phi(s,x)\md x\md s +\iint_{Q_T}\!\big(\tilde{u}(s,x)\partial_{s}\phi(s,x)+\nabla\tilde{u}(s,x)\cdot\nabla \phi(s,x)\big)\md x\md s \\
 & \displaystyle \geq\iint_{Q_T}G(s,x, \nabla\tilde{u}(s,x))\phi(s,x) \md x\md s.
\end{array}$$
Due to the arbitrariness of $\widetilde\varepsilon $, we complete the proof.
\end{proof}

\smallskip

We now prove Proposition~\ref{thm:liminf}.

\begin{proof}[Proof of Proposition~\ref{thm:liminf}]
By Theorem~\ref{Thm:exi-uni}, \(V\in W_p^{1,2}(Q_T)\) solves the HJB equation
\eqref{eq:hjb}; in particular, it is a weak subsolution of \eqref{eq:hjb}.
According to Lemma~\ref{weak-super} and the comparison principle
in Lemma~\ref{lem:HJB-weak-comparison}, we have
$V(s,x)\leq \tilde{u}(s,x)$ for all $(s,x)\in Q_T$.

By \eqref{u-tilde u}, we have
$
V(s,x)\le \lim_{n\to\infty}u_{\varepsilon_n}(s,x).
$
Taking the sequence \(\varepsilon_n\) so that
$
\lim_{n\to\infty}u_{\varepsilon_n}(s,x)
=
\liminf_{\varepsilon\to0}u_\varepsilon(s,x),
$
we obtain
$
V(s,x)\le \liminf_{\varepsilon\to0}V_\varepsilon(s,x).
$
The proof is complete.
\end{proof}

\subsection{Proof of Proposition~\ref{Prop:V>V_n}}\label{subsec:proof-countable}

Without loss of generality, write
\[
A=\{a_1,a_2,\dots,a_k,\dots\}.
\]

\begin{lemma}\label{lem:composition-convergence}
Assume that $A$ is countable, and let
$\theta:Q_T\to A$ be a fixed Borel measurable feedback selector. Let
$X^{\varepsilon,\theta}$ and $X^\theta$ denote the unique strong solutions of
the following SDEs:
$$\begin{aligned}
&\md X_t^{\varepsilon,\theta}
=
b_\varepsilon\bigl(t,X_t^{\varepsilon,\theta},
\theta(t,X_t^{\varepsilon,\theta})\bigr)\,\md t
+\sqrt2\,\md W_t,\qquad
X_s^{\varepsilon,\theta}=x,\quad t\in[s,T],\\ 
&\md X_t^\theta
=
b\bigl(t,X_t^\theta,\theta(t,X_t^\theta)\bigr)\,\md t
+\sqrt2\,\md W_t,\qquad
X_s^\theta=x,\quad t\in[s,T].
\end{aligned}$$
The corresponding feedback costs are
$$
\begin{aligned}
&J_\varepsilon(\theta;s,x)
 :=
\mathbb E\int_s^T
f_\varepsilon(t,X_t^{\varepsilon,\theta},
\theta(t,X_t^{\varepsilon,\theta}))\,\md t,\\
&J(\theta;s,x)
 :=
\mathbb E\int_s^T
f(t,X_t^\theta,\theta(t,X_t^\theta))\,\md t .
\end{aligned}
$$
Then, as  $ \varepsilon\downarrow0 $,

{\rm(i)} 
$
g_\varepsilon(\cdot,\cdot,\theta(\cdot,\cdot))
\longrightarrow
g(\cdot,\cdot,\theta(\cdot,\cdot))
$  in $L^p(Q_T),
$ for $g=b,f$;

{\rm(ii)}  
$ 
J_\varepsilon(\theta;s,x)\longrightarrow J(\theta;s,x),
$   for all $(s,x)\in Q_T$.
\end{lemma} 

\begin{proof}  
Since $\theta$ is Borel measurable, the composed drifts
$b_\varepsilon(t,x,\theta(t,x))$ and $b(t,x,\theta(t,x))$ are deterministic
Borel functions. Moreover,
$
|b_\varepsilon(t,x,\theta(t,x))|\le\Phi_\varepsilon(t,x)
$
and
$
|b(t,x,\theta(t,x))|\le\Phi(t,x).
$
By Lemma~\ref{lem:state-equation-solvability}{\rm(ii)}, both state equations
therefore admit unique strong solutions and satisfy Krylov estimates. The
constants in the estimates are uniform in $\varepsilon$, because
$
\|\Phi_\varepsilon\|_{L^p}\le\|\Phi\|_{L^p}.
$

For $i\ge1$, set
$$
E_i:=\{(t,x)\in Q_T:\theta(t,x)=a_i\}.
$$
Since 
$A=\{a_1,a_2,\ldots\}$ is countable and each $E_i$ is Borel measurable,
the family $\{E_i\}_{i\ge1}$ is a measurable partition of $Q_T$. Hence, for $g=b,f$,
we have
$$
g_\varepsilon(t,x,\theta(t,x))-g(t,x,\theta(t,x))
=
\sum_{i=1}^\infty
\bigl(g_\varepsilon(t,x,a_i)-g(t,x,a_i)\bigr)\mathbf 1_{E_i}(t,x).
$$
By \eqref{eq:b-f-eps},
  for every fixed $N\ge1$,
$$
\sum_{i=1}^N
\bigl(g_\varepsilon(\cdot,\cdot,a_i)-g(\cdot,\cdot,a_i)\bigr)
\mathbf 1_{E_i}
\longrightarrow 0
\quad\text{in }L^p(Q_T),
\qquad \varepsilon\downarrow0.
$$

Next, setting
$ 
E^{N}:=\bigcup_{i>N}E_i,
$  we know  $E^{N}\downarrow\emptyset$ as $N\to\infty$.
Then
\begin{align*}
&\Big\|
\sum_{i>N}
\bigl(g_\varepsilon(\cdot,\cdot,a_i)-g(\cdot,\cdot,a_i)\bigr)
\mathbf 1_{E_i}
\Big\|_{L^p} \le
\|(\Phi_\varepsilon+\Phi)\mathbf 1_{E^{N}}\|_{L^p}
\\
&\le
\|(\Phi_\varepsilon-\Phi)\mathbf 1_{E^{N}}\|_{L^p}
+
2\|\Phi\mathbf 1_{E^{N}}\|_{L^p}
\\
& \le
\|\Phi_\varepsilon-\Phi\|_{L^p}
+
2\|\Phi\mathbf 1_{E^{N}}\|_{L^p} ,
\end{align*}
where we use  Assumption \ref{ass:main}. 
Then, using \eqref{eq:b-f-eps},
 we obtain
$$
\lim_{N\to\infty}\limsup_{\varepsilon\downarrow0}
\Big\|
\sum_{i>N}
\bigl(g_\varepsilon(\cdot,\cdot,a_i)-g(\cdot,\cdot,a_i)\bigr)
\mathbf 1_{E_i}
\Big\|_{L^p}
=0.
$$
Combining the convergence of the finite sum with the above tail estimate, we
conclude that the assertion (i) holds true.

We now prove (ii). For simplicity, for $g=b,f$, we write
$$
\bar g_\varepsilon(t,x):=g_\varepsilon(t,x,\theta(t,x)),
\qquad
\bar g(t,x):=g(t,x,\theta(t,x)), \quad (t,x)\in Q_T.
$$
Moreover, by (i) and the stability estimate for singular SDEs, see
\cite[Theorem~1.2]{GaleatiLing2023}, we have a constant $C$ independent of  $\varepsilon$ such that,
\begin{equation}\label{Xe-to-X}
\mathbb E\Big[
\sup_{r\in[s,T]}
|X_r^{\varepsilon,\theta}-X_r^\theta|
\Big]\leq C\|\bar b_\varepsilon-\bar b\|_{L^p}
\longrightarrow0 ,  \mbox{ as }\varepsilon\to0 .
\end{equation}

Next, since
$C_c^\infty(Q_T)$ is dense in $L^p(Q_T)$, for any $\eta>0$, we may choose
$\varphi\in C_c^\infty(Q_T)$ such that
$ 
\|\bar f-\varphi\|_{L^p}<\eta .
$ 
Then, by Krylov's estimate again and \eqref{Xe-to-X},
\begin{align*}
&\mathbb E\int_s^T
\big|\bar f(r,X_r^{\varepsilon,\theta})
-\bar f(r,X_r^\theta)\big|\,\md r
\\
&\le
\mathbb E\int_s^T\big(
\big|\bar f(r,X_r^{\varepsilon,\theta})
-\varphi(r,X_r^{\varepsilon,\theta})\big|\,\md r
 + \big|\varphi(r,X_r^\theta)
-\bar f(r,X_r^\theta)\big|\big)
\,\md r
\\
&\quad+
\mathbb E\int_s^T
\big|\varphi(r,X_r^{\varepsilon,\theta})
-\varphi(r,X_r^\theta)\big| \,\md r
\\
&\le
C\|\bar f-\varphi\|_{L^p}
+
\|\nabla_x\varphi\|_\infty
\int_s^T
\mathbb E\big|X_r^{\varepsilon,\theta}-X_r^\theta\big|\,\md r 
\\
 %
&\le
C\eta
+
C\|\nabla_x\varphi\|_\infty
\mathbb E\Big[
\sup_{r\in[s,T]}
|X_r^{\varepsilon,\theta}-X_r^\theta|
\Big].
\end{align*}
Hence,
\begin{equation*}
\begin{aligned}
& \big|J_{\varepsilon}(\theta;s,x)-J(\theta;s,x)\big|
=
\Big|\mathbb E\int_s^T
\big(\bar f_{\varepsilon}(r,X_r^{\varepsilon,\theta})
-\bar f(r,X_r^\theta)\big)\,\md r\Big|
\\
&\le
\mathbb E\int_s^T
\big|(\bar f_{\varepsilon}-\bar f)(r,X_r^{\varepsilon,\theta})\big|
\,\md r
+
\mathbb E\int_s^T
\big|\bar f(r,X_r^{\varepsilon,\theta})
-\bar f(r,X_r^\theta)\big|\,\md r
\\
&\le
C\|\bar f_\varepsilon-\bar f\|_{L^p}
+C\eta
+
C\|\nabla_x\varphi\|_\infty
\mathbb E\Big[
\sup_{r\in[s,T]}
|X_r^{\varepsilon,\theta}-X_r^\theta|
\Big],
\end{aligned}
\end{equation*}
We emphasize that the generic constant $C>0$ appearing
in the above estimates is independent of $\varepsilon$ and $\eta$. Its
uniformity in $\varepsilon$ follows from
$$
\|\bar b_\varepsilon\|_{L^p }
\leq
\|\Phi_\varepsilon\|_{L^p }
\leq
\|\Phi\|_{L^p },
\qquad
\|\bar b\|_{L^p }
\leq
\|\Phi\|_{L^p },
$$
and the corresponding uniform Krylov estimates.

Therefore, by first letting $\varepsilon\downarrow0$ and then using the
arbitrariness of $\eta>0$, we obtain (ii).  
\end{proof}

\medskip

\begin{proof}[Proof of Proposition~\ref{Prop:V>V_n}]
 Fix $(s,x)\in Q_T$ and let $\delta>0$ be arbitrary. To prove the
limsup inequality, it is enough to test the regularized value function by
feedback controls. Indeed, every Borel feedback selector induces an admissible
weak control system for the regularized problem, and therefore, for every
Borel feedback selector
$\theta:Q_T\to A$,
$
V_\varepsilon(s,x)\le J_\varepsilon(\theta;s,x).
$
By Theorem~\ref{thm:verification}, there exists a Borel measurable feedback
selector
$ 
\theta^\delta:Q_T\to A
$ 
such that the induced feedback control is admissible and
$$
J(\theta^\delta;s,x)\le V(s,x)+\delta .
$$
Applying Lemma~\ref{lem:composition-convergence}   to the
fixed feedback selector $\theta^\delta$, we get
$$
J_\varepsilon(\theta^\delta;s,x)
\longrightarrow
J(\theta^\delta;s,x),
\qquad \varepsilon\downarrow0 .
$$ 

On the other hand, the feedback control induced by $\theta^\delta$ is admissible
for the regularized problem, so the definition of the regularized value function gives
$$
V_\varepsilon(s,x)\le J_\varepsilon(\theta^\delta;s,x),
\qquad \varepsilon>0 .
$$
Therefore,
$$
\limsup_{\varepsilon\downarrow0}V_\varepsilon(s,x)
\le
\lim_{\varepsilon\downarrow0}J_\varepsilon(\theta^\delta;s,x)
=
J(\theta^\delta;s,x)
\le
V(s,x)+\delta .
$$
Since $\delta>0$ is arbitrary, we conclude that
$$
\limsup_{\varepsilon\downarrow0}V_\varepsilon(s,x)
\le V(s,x),
\qquad (s,x)\in Q_T .
$$
The proof is complete.
\end{proof}
\section*{Acknowledgments}
 The authors are grateful to the anonymous reviewers for their careful reading
and constructive comments, which have helped improve the quality and clarity of
the paper.

 \appendix
 \section{A weak comparison principle}

We first recall a whole-space maximum principle which will be used in the comparison argument below. It follows from the maximum principle for parabolic
equations with singular drift; see Zhang
\cite[Theorem 1.3; see also Theorem 3.6]{ZhangMP}.

\begin{lemma} \label{lem:whole-space-max}
   Suppose that \(p>d+2\) and
$
\theta\in L^p(Q_T;\mathbb R^d),
$ $
F\in L^p(Q_T).
$
Let \(w\in W_p^{1,2}(Q_T)\) satisfy
\begin{equation*}\label{eq:linear-ineq-terminal}
\partial_s w+\Delta w+\theta(s,x)\cdot\nabla w\ge -F(s,x)
\quad\text{  in } Q_T.
\end{equation*}
 in the weak sense: for every nonnegative
\(\phi\in C_c^{1,\infty}(Q_T)\),
\begin{equation*}\label{eq:linear-ineq-weak}
\iint_{Q_T}
\left[
-w\,\partial_s\phi
-\nabla w\cdot\nabla\phi
+\theta(s,x)\cdot\nabla w\,\phi
+F\phi
\right]\,\md x\md s
\ge0.
\end{equation*}  
Assume moreover that
$
w(T,x)\le0,$ $ x\in\mathbb R^d .
$
Then
\begin{equation}\label{eq:linear-max-estimate}
\sup_{Q_T} w^+
\le
C\|F^+\|_{L^p },
\end{equation}
where \(C\) depends only on \(d,p,T\) and
\(\|\theta\|_{L^p }\). In particular, if \(F=0\), then
$
w\le0$ $\text{in }Q_T .
$
\end{lemma}

\begin{proof}
Setting  \(\bar w(t,x):=w(T-t,x)\) and $
\bar\theta(t,x):=\theta(T-t,x),$ $
\bar F(t,x):=F(T-t,x),
$ $(t,x)\in Q_T$, we know  \(\bar w\in W_p^{1,2}(Q_T)\) and 
\[
\partial_t\bar w
\le
\Delta\bar w+\bar\theta(t,x)\cdot\nabla\bar w+\bar F(t,x)
\quad\text{in }Q_T,
\]
 in the distributional sense.  Moreover,
$
\bar w(0,x)\le0,$ $ x\in\mathbb R^d .
$

Since \(p>d+2\) and \(\bar w\in W_p^{1,2}(Q_T)\), by the parabolic Sobolev embedding,  \(\bar w\) is continuous and
\(\nabla\bar w\) is locally bounded.
Let
$
z:=\bar w^+ .
$
By the standard truncation argument, \(z\) is a Lipschitz weak subsolution, 
in the sense of  \cite[Definition 1.1]{ZhangMP}, of
\[
\partial_t z
\le
\Delta z+\bar\theta(t,x)\cdot\nabla z+\bar F^+(t,x)
\quad\text{in }Q_T .
\]
Moreover,  $
z(0,x)=0,$ $x\in\mathbb R^d .
$
Extending \(z\), \(\bar\theta\) and \(\bar F^+\) by zero for \(t\le0\), we may apply 
\cite[Theorem 1.3; see also Theorem 3.6]{ZhangMP} to the uniformly parabolic
case.
Indeed, in \cite{ZhangMP}, we take
$ 
p_0=p_1=\infty,$ $ b_1=\bar\theta,$ $  b_2=0,
$ $  p_2=q_2=p,
$ and then have $
\|\bar\theta\|_{\widetilde L^{p,p}_{t,x}}
\le \|\theta\|_{L^p },
$ $
\|\bar F^+\|_{\widetilde L^{p,p}_{t,x}}
\le \|F^+\|_{L^p}.
$  
Therefore, \cite[Theorem 1.3]{ZhangMP} yields
\[
\sup_{Q_T}\bar w^+
=
\sup_{Q_T} z
\le
C\|\bar F^+\|_{L^p}
=
C\|F^+\|_{L^p} .
\]
Returning to the original time variable, we obtain  \eqref{eq:linear-max-estimate}.  
\end{proof}

We now prove the comparison principle for the HJB equation \eqref{eq:hjb}. For convenience, we set
$\displaystyle
\mathbb H(s,x,P)
:=
\inf_{a\in A}
\{b(s,x,a)\cdot P+f(s,x,a)\},
$ $  (s,x,P)\in Q_T\times \mathbb R^d .
$

\smallskip

\begin{lemma} 
\label{lem:HJB-weak-comparison}
Under Assumption \ref{ass:main}, 
let \(u,v\in W_p^{1,2}(Q_T)\) be, respectively, a weak subsolution and a weak
supersolution of HJB equation \eqref{eq:hjb}, namely, for every nonnegative
\(\phi\in C_c^{1,\infty}(Q_T)\),
\[
\iint_{Q_T}
-\big(u\partial_s\phi+\nabla u\cdot\nabla\phi\big)\,\md x\md s
+\iint_{Q_T}\mathbb H(s,x,\nabla u)\phi\,\md x\md s
\ge0,
\]
and
\[
\iint_{Q_T}
-\big(v\partial_s\phi+\nabla v\cdot\nabla\phi\big)\,\md x\md s
+\iint_{Q_T}\mathbb H(s,x,\nabla v)\phi\,\md x\md s
\le0.
\]
Assume moreover that
$
u(T,x)\le v(T,x),$ $ x\in\mathbb R^d .
$
Then
$
u(s,x)\le v(s,x),$ $ (s,x)\in Q_T .
$
\end{lemma}

\begin{proof}
Setting
$
W:=u-v,
$ we obtain, for every nonnegative
\(\phi\in C_c^{1,\infty}(Q_T)\),
\begin{equation}\label{eq:diff-weak-HJB}
\begin{aligned}
&0\le \iint_{Q_T}
-\big(W\partial_s\phi+\nabla W\cdot\nabla\phi\big)\,\md x\md s  \\
&\qquad
+\iint_{Q_T}
\big(\mathbb H(s,x,\nabla u)-\mathbb H(s,x,\nabla v)\big)\phi\,\md x\md s\\
 &= \iint_{Q_T}
-\big(W\partial_s\phi+\nabla W\cdot\nabla\phi\big)\,\md x\md s   
+\iint_{Q_T}
\theta(s,x)\cdot\nabla W(s,x)\phi\,\md x\md s,
\end{aligned}
\end{equation}
where \(\theta:Q_T\to\mathbb R^d\) is introduced as follows,
\[
\theta(s,x):=
\begin{cases}
\dfrac{\mathbb H(s,x,\nabla u)-\mathbb H(s,x,\nabla v)}
      {|\nabla u-\nabla v|^2}
      \big(\nabla u-\nabla v\big),
& \nabla u(s,x)\ne\nabla v(s,x),\\[2ex]
0,
& \nabla u(s,x)=\nabla v(s,x).
\end{cases}
\]
Notice that, for any \(P_1,P_2\in\mathbb R^d\),
\[
\begin{aligned}
|\mathbb H(s,x,P_1)-\mathbb H(s,x,P_2)|
&\le
\sup_{a\in A}
|b(s,x,a)\cdot(P_1-P_2)| \le
\Phi(s,x)|P_1-P_2|,
\end{aligned}
\]
for a.e. \((s,x)\in Q_T\). 
We know $
|\theta(s,x)|\le \Phi(s,x),$ and
$
\theta\in L^p(Q_T;\mathbb R^d).
$ Hence \eqref{eq:diff-weak-HJB} shows that \(W\) satisfies the hypothesis of
Lemma~\ref{lem:whole-space-max} with \(F=0\). Since
$
W(T,\cdot) \le0$,  we get
$
W\le0 $ $\text{in }Q_T .
$
That is,
\[
u(s,x)\le v(s,x),\qquad (s,x)\in Q_T .
\]
The proof is complete.
\end{proof}

\bibliographystyle{siam}
\bibliography{ref}

\end{document}